\let\hat=\widehat
\let\tilde=\widetilde
\numberwithin{equation}{subsection}
\newtheorem{theorem}[equation]{Théorème} 
\newtheorem{proposition}[equation]{Proposition}
\newtheorem{lemme}[equation]{Lemme}
\theoremstyle{remark}
\newtheorem{definition}[equation]{Définition}
\DeclareMathOperator{\ord}{ord}
\DeclareMathOperator{\Max}{Max}
\DeclareMathOperator{\id}{id}
\DeclareMathOperator{\Spec}{Spec}
\DeclareMathOperator{\Sym}{Sym}
\DeclareMathOperator{\Proj}{Proj}
\DeclareMathOperator{\red}{red}
\def\cartesien{\ar@{}[rd]|{\square}}
\DeclareMathOperator{\sw}{sw}
\DeclareMathOperator{\rsw}{rsw}
\DeclareMathOperator{\Min}{Min}
\DeclareMathOperator{\Ext}{Ext}
\title[La construction d'Abbes et Saito pour les connexions]{La construction d'Abbes et Saito pour les connexions méromorphes: aspect formel en dimension 1 
}
\author[J.-B.~Teyssier]{Jean-Baptiste Teyssier}
\date{}
\curraddr{Centre de Mathématiques Laurent Schwartz, École polytechnique,
91128 Palaiseau cedex, France}
\email{jean-baptiste.teyssier@math.polytechnique.fr}
\begin{document}

\maketitle
\begin{abstract} 
Dans \cite{AS}, Abbes et Saito définissent une mesure géométrique de la ramification sauvage des faisceaux $\ell$-adiques sur le point générique d'un trait complet d'égale caractéristique $p$, avec $p\neq \ell$. En adaptant leur construction aux modules différentiels en égale caractéristique nulle, on démontre pour un tel module $\mathcal{M}$ une formule qui exprime cet invariant géométrique en terme des formes différentielles intervenant dans la décomposition de Levelt-Turrittin de $\mathcal{M}$. 
\end{abstract}

\section*{Introduction}
Inspirés par l'analyse micro-locale de Kashiwara et Schapira \cite{KS}, Abbes et Saito \cite{AS} associent à tout faisceau $\ell$-adique $\mathcal{F}$ sur le point générique d'un trait complet $S$ d'égale caractéristique $p$, et ce pour tout nombre rationnel $r>0$, un nombre fini de formes différentielles tordues, généralisant ainsi en rang supérieur la construction du conducteur de Swan raffiné pour les caractères d'Artin-Schreier-Witt développée par Kato \cite{Kato}. 
Ces formes sont obtenues comme support d'un faisceau construit à partir de $\mathcal{F}$  via des manipulations de nature géométrique, du foncteur des cycles proches et de la transformation de Fourier $\ell$-adique. En particulier, ces ingrédients sont disponibles dans le cadre des $\mathcal{D}$-modules. \\ \indent 
Motivé par les analogies entre l'irrégularité des $\mathcal{D}$-modules complexes et la ramification sauvage des faisceaux $\ell$-adiques en caractéristique positive, on peut donc se demander ce que donne cette construction en égale caractéristique nulle lorsqu'on remplace $\mathcal{F}$ par un module différentiel $\mathcal{M}$.  \\ \indent
Lorsque $r>1$, on démontre dans ce travail une formule explicite \ref{théorème} reliant la construction d'Abbes et Saito appliquée à $\mathcal{M}$ aux polynômes de Laurent de degré $\leq r-1$ intervenant dans la décomposition de Levelt-Turrittin de $\mathcal{M}$. En particulier, \ref{théorème} confirme que les invariants de la ramification sauvage construits par Abbes et Saito "sont les bons". Dans le cas où $\mathcal{M}$ est de pente unique $r'>0$, le support du $\mathcal{D}$-module obtenu pour $r=r'+1$ est ponctuel et correspond à l'ensemble  des coefficients dominants (à multiplication par $1-r$ près) des polynômes de Laurent de degré $r^{\prime}$ attachés à $\mathcal{M}$. Il s'agit de l'analogue pour les modules différentiels du théorème \cite[9.15]{AS} d'Abbes et Saito\footnote{Maintenant prouvé dans le cas à corps résiduel non nécessairement parfait par Saito. Voir \cite{2013arXiv1301.4632S}.}. \\ \indent
Si $X$ est une variété complexe lisse, $Y$ une hypersurface lisse de $X$ et $\mathcal{M}$ un $\mathcal{D}_{X}$-module holonome, Yves Laurent \cite{Laurent} \cite{LaurentPolygone} définit pour tout nombre rationnel $r>1$ un cycle lagrangien $\sigma_{r}(\mathcal{M})$ de $T^{\ast}T_{Y}^{\ast}X$ qui fournit une mesure géométrique de l'irrégularité de $\mathcal{M}$ le long de $Y$. C'est la notion de \textit{cycle micro-caractéristique}. Dans le cas d'un trait, le support de $\sigma_{r}(\mathcal{M})$ est déterminé par l'ensemble des coefficients dominants des polynômes de degré $1/(r-1)$ attachés à $\mathcal{M}$. Le théorème \ref{théorème} montre en particulier que dans le cas d'un trait sur un corps algébriquement clos, la construction d'Abbes et Saito constitue une variante de la théorie de Laurent. L'auteur ignore si ceci subsiste en dimension supérieure.\\ \indent
Pour démontrer \ref{théorème}, on commence en \ref{Réduction} par réduire le problème au cas où le corps de base est $\mathds{C}$.
Il s'agit d'une manifestation du principe de Lefschetz. La stratégie est alors de se ramener à la situation où $\mathcal{M}$ est donné sous forme décomposée tout en contrôlant la façon dont sont affectés les cycles proches qui interviennent dans la construction d'Abbes et Saito. On conclut alors grâce à divers lemmes d'annulation et à un calcul explicite.\\ \indent
Ce texte est une partie de la thèse de l'auteur effectuée sous la direction de Claude Sabbah. Je le remercie pour avoir partagé avec moi son intuition que la construction d'Abbes et Saito devait être reliée aux parties les plus polaires des formes de Levelt-Turrittin, ainsi que pour m'avoir inculqué avec patience tout ce que je sais des $\mathcal{D}$-modules. Je remercie Ahmed Abbes et Marco Hien pour l'intérêt qu'ils ont porté à ce travail lors de son élaboration. Je remercie aussi le référé pour de nombreuses remarques qui ont contribué à améliorer la lisibilité de ce texte.
\section{Notations}\label{notationetrappel}
\subsection{}
On désigne par $\mathds{K}$ un corps de caractéristique nulle, par $\overline{\mathds{K}}$ une clôture algébrique de $\mathds{K}$, et on note $G_{\mathds{K}}$ le groupe de Galois de $\overline{\mathds{K}}$ sur $\mathds{K}$. Pour une extension quelconque $\mathds{L}$ de $\mathds{K}$, la présence d'un indice $\mathds{L}$ sera synonyme de changement de base à une situation sur $\mathds{L}$. Cet indice sera omis lorsque $\mathds{L}=\mathds{K}$. \\ \indent
Si $X$ est une variété sur $\mathds{K}$ et $P$ un point fermé de $X$, on désignera par $\mathds{K}(P)$ le corps résiduel de $P$. Il s'agit d'une extension finie de $\mathds{K}$.
\subsection{}
Si $S$ est un schéma et si $\mathcal{E}$ est un faisceau quasi-cohérent sur $S$, on note suivant \cite{EGA2} $\mathbf{V}(\mathcal{E})$ pour le spectre de l'algèbre quasi-cohérente $\Sym_{\mathcal{O}_{S}} \mathcal{E}$ et $\mathbf{P}(\mathcal{E})$ pour le $\Proj$ de $\Sym_{\mathcal{O}_{S}} \mathcal{E}$. Les schémas $\mathbf{V}(\mathcal{E})$ et $\mathbf{P}(\mathcal{E})$ seront dans la suite implicitement munis de leur structure de schéma sur $X$.
\subsection{}
On notera $\mathfrak{F}$ la transformation de Fourier sur $\mathds{A}^{1}_{\mathds{K}}$, et pour un point fermé $P$ de $\mathds{A}^{1}_{\mathds{K}}$, on désignera par $\delta_{P}$ le $\mathcal{D}$-module Dirac en $P$. Dans une coordonnée $y$ de $\mathds{A}^{1}_{\mathds{K}}$, le point $P$ correspond à l'orbite sous $G_{\mathds{K}}$ d'un scalaire $c\in \overline{\mathds{K}}$. Si $\mu_{c}(y)$ est le polynôme minimal de $c$ sur $\mathds{K}$, le Dirac $\delta_{P}$ est par définition le module
$
\mathcal{D}_{\mathds{A}^{1}_{\mathds{K}}}/\mathcal{D}_{\mathds{A}^{1}_{\mathds{K}}}\mu_{c}(y)
$.

\section{La construction d'Abbes et Saito} \label{construction}
\subsection{Prologue géométrique}\label{prologue}
On rappelle ici le nécessaire concernant la notion de dilatation. Pour une exposition plus circonstanciée, on pourra se reporter à \cite{AS}. \\ \indent
Soit $f:Y \rightarrow X$ un morphisme de schémas sur $\mathds{K}$, $D$ un sous-schéma fermé de $X$ défini par un faisceau d'idéaux $\mathcal{I}$ et $E$ un sous-schéma fermé de $f^{-1}(D)$ défini par un faisceau d'idéaux $\mathcal{J}$ sur $Y$. Alors 
$\mathcal{I} \cdot \mathcal{O}_{Y}\subset \mathcal{J}$, de sorte qu'on dispose d'un morphisme de $\mathcal{O}_{Y}$-algèbres graduées
\begin{equation} \label{theta}
\begin{array}{c}
\xymatrix{
\theta : f^{\ast} (\oplus_{\mathds{N}}\mathcal{I}^{n})    \ar[r]
&  \oplus_{\mathds{N}}\mathcal{J}^{n}.
}
\end{array}
\end{equation}
Notons $\tilde{Y}_{E}$ (resp. $\tilde{X}_{D}$) l'éclaté de $Y$ le long de $E$ (resp. $D$). Si $\mathfrak{p}\in \tilde{Y}_{E}$, alors $\theta^{-1}(\mathfrak{p})$ détermine un élément de $\tilde{X}_{D}\times_{X}Y= \Proj f^{\ast} (\oplus_{\mathds{N}}\mathcal{I}^{n})$ si et seulement si $\mathfrak{p}$ est dans l'un des ouverts $D_{+}(\theta(x))$ de $\tilde{Y}_{E}$, avec $x\in \mathcal{I}$ vu dans l'algèbre source comme élément de degré $1$. On en déduit que
$\cup_{x\in \mathcal{I}}D_{+}(\theta(x))$ est le plus grand ouvert de $\tilde{Y}_{E}$, noté $Y_{(D)}$ sur lequel $\theta$ induit un morphisme de schémas $Y_{(D)} \rightarrow \tilde{X}_{D}\times_{X}Y$.
\begin{definition}
On appelle $Y_{(D)}$ \textit{la dilatation de $Y$ en $E$ par rapport à $D$}.
\end{definition} 
Soit $f:Y\rightarrow X$ un morphisme séparé de $\mathds{K}$-schémas localement noethériens et $g:X \rightarrow Y$ une section de $f$. Le morphisme $g$ est alors une immersion fermée. Soit $D$ un sous-schéma fermé de $X$, de complémentaire $U$ et $i:D \longrightarrow X$ l'injection canonique. Notons encore $Y_{(D)}$ le dilaté de $Y$ en $g(D)$ par rapport à $D$. Si $E_{[D]}$ désigne le diviseur exceptionnel de $\tilde{Y}_{g(D)}$ et $E_{(D)}$ l'intersection de $E_{[D]}$ avec l'ouvert $Y_{(D)}$, on dispose d'après \cite[2.10]{AS} du diagramme à carré cartésiens
\[\xymatrix{ 
E_{(D)} \ar[r]  \ar[d] 
& Y_{(D)} \ar[d]
& Y\times_{X}U \ar[d] \ar[l] \\  
D \ar[r]
& X     
&U\ar[l] 
}
\]
Supposons de plus que $D$ est un diviseur de Cartier. Alors par \cite[21.2.12]{EGA4}, $i$ est une immersion régulière donc \cite[16.9.13]{EGA4} assure que la suite des faisceaux conormaux pour $D\overset{i}{\longrightarrow} X \overset{g}{\longrightarrow} Y$
\[
\xymatrix{ 
0 \ar[r]
&i^{\ast}\mathcal{N}_{X/Y}^{\vee} \ar[r]
& \mathcal{N}_{D/Y}^{\vee} \ar[r]
&\mathcal{N}_{D/X}^{\vee} \ar[r]
& 0
}
\]
est exacte. Si $\mathcal{I}$ (resp. $\mathcal{J}$) désigne le faisceau d'idéaux de $D$ dans $X$ (resp. de $g(D)$ dans $Y$), cette suite s'explicite en 
\begin{equation} \label{conormaux}
\begin{array}{c}
\xymatrix{ 
0 \ar[r]
&i^{\ast}\mathcal{N}_{X/Y}^{\vee}  \ar[r]
&\mathcal{J}/\mathcal{J}^{2} \ar[r]^-{g^{\sharp}}
&\mathcal{I}/\mathcal{I}^{2} \ar[r]
& 0  
}.
\end{array}
\end{equation}
Puisque $g$ est une section de  $f$, $f^{\sharp}$ fournit un scindage 
\begin{equation} \label{scindage}
\begin{array}{c}
\xymatrix{ 
\mathcal{J}/\mathcal{J}^{2}  \ar[r]^-{\sim}
& i^{\ast}\mathcal{N}_{X/Y}^{\vee}  \oplus \mathcal{I}/\mathcal{I}^{2}
}.
\end{array}
\end{equation} \indent
Supposons de plus que $g$ est une immersion régulière. Alors, $g \circ i$ est aussi régulière et on a suivant \cite[16.9.3]{EGA4} une identification canonique $\Sym \mathcal{J}/\mathcal{J}^{2} \overset{\sim}{\longrightarrow} \oplus_{\mathds{N}} \mathcal{J}^{n}/\mathcal{J}^{n+1}$, d'où une identification $
E_{[D]}\overset{\sim}{\longrightarrow}\mathbf{P}(\mathcal{J}/\mathcal{J}^{2})$. \\ \indent
Or $\mathcal{I}/\mathcal{I}^{2}\simeq i^{\ast}\mathcal{O}_{X}(-D):=\mathcal{O}_{D}(-D)$ est un fibré en droite sur $D$, d'où on déduit à l'aide de \eqref{scindage} une identification 
\begin{equation}\label{iden}
E_{[D]}\overset{\sim}{\longrightarrow}\mathbf{P}(\mathcal{J}/\mathcal{J}^{2}\otimes \mathcal{O}_{D}(D))\simeq \mathbf{P}(i^{\ast}\mathcal{N}_{X/Y}^{\vee}  \otimes \mathcal{O}_{D}(D) \oplus \mathcal{O}_{D})
\end{equation}
Soit $U$ un ouvert affine de $X$ sur lequel $D$ est défini par une fonction $t$. Avec les notations de \eqref{theta}, on a  
$$E_{(D)|U}=E_{[D]|U}\cap D_{+}(\theta(t))=D_{+}([f^{\sharp}(t)])$$
 avec $[f^{\sharp}(t)] \in \Sym(\mathcal{J}/\mathcal{J}^{2})$ de degré~$1$. Donc à travers l'identification \eqref{scindage}, $\mathfrak{p}\in E_{[D]|U}$ définit un élément de $E_{(D)|U}$ si et seulement si  $\mathfrak{p}$ ne contient pas $0 \oplus [t]\in \Sym (i^{\ast}\mathcal{N}_{X/Y}^{\vee}  \oplus (t)/(t^{2}))$ vu en degré~$1$, soit encore que $\mathfrak{p}$ est d'intersection nulle avec le facteur $(t)/(t^{2})$ placé en degré~$1$. \\ \indent
A travers l'identification \eqref{iden}, le schéma $E_{(D)}$ correspond donc aux idéaux $\mathfrak{p}\in \mathbf{P}(i^{\ast}\mathcal{N}_{X/Y}^{\vee}  \otimes \mathcal{O}_{D}(D) \oplus \mathcal{O}_{D})$ ne rencontrant pas le facteur $\mathcal{O}_{D}$ placé en degré 1. C'est donc selon \cite[8.4.1]{EGA2} le fibré vectoriel $\mathbf{V}(i^{\ast}\mathcal{N}_{X/Y}^{\vee}  \otimes \mathcal{O}_{D}(D))$. Du fait de l'identification $\mathcal{N}_{X/Y}^{\vee} \simeq g^{\ast}(\Omega^{1}_{Y/X})$, on a ainsi suivant \cite[3.5]{AS}
\begin{proposition}[\small{interprétation différentielle de la fibre spéciale du dilaté}]\label{interprétation différentielle}\noindent
Avec les notations de \ref{prologue}, si on suppose que $D$ est un diviseur de Cartier et que $g$ est une immersion régulière, alors on a une identification
\[
\xymatrix{ 
E_{(D)} \ar[r]^-{\sim}
& \mathbf{V}((g\circ i)^{\ast}\Omega^{1}_{Y/X} \otimes \mathcal{O}_{D}(D))
}.
\]
\end{proposition}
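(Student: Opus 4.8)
Le plan est de décrire d'abord le diviseur exceptionnel $E_{[D]}$ de l'éclatement $\tilde{Y}_{g(D)}$ comme un fibré projectif au-dessus de $D$, puis d'identifier précisément quel ouvert de ce fibré est découpé par la dilatation. L'idée directrice est que $E_{(D)}$ s'obtient à partir de $E_{[D]}$ en retranchant l'« hyperplan à l'infini » attaché à l'équation locale de $D$, ce qui transforme un fibré projectif en le fibré vectoriel annoncé.

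Je commencerais par rassembler les ingrédients algébriques. Le diviseur $D$ étant de Cartier, $i$ est une immersion régulière d'après \cite[21.2.12]{EGA4}, et la composée $g\circ i$ l'est aussi; ceci assure à la fois l'exactitude de la suite conormale \eqref{conormaux} et, via $\Sym \mathcal{J}/\mathcal{J}^{2}\overset{\sim}{\to}\oplus_{\mathds{N}}\mathcal{J}^{n}/\mathcal{J}^{n+1}$, l'identification $E_{[D]}\overset{\sim}{\to}\mathbf{P}(\mathcal{J}/\mathcal{J}^{2})$. Le fait que $g$ soit une section de $f$ fournit le scindage \eqref{scindage}, tandis que l'identification différentielle $\mathcal{N}_{X/Y}^{\vee}\simeq g^{\ast}\Omega^{1}_{Y/X}$ traduit le premier facteur en termes de formes relatives, et le caractère Cartier donne $\mathcal{I}/\mathcal{I}^{2}\simeq \mathcal{O}_{D}(-D)$.

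Le cœur de l'argument serait ensuite local sur un ouvert affine $U$ où $D$ a pour équation $t=0$. Sur un tel ouvert, la dilatation coïncide avec l'ouvert $D_{+}(\theta(t))$ de l'éclatement, c'est-à-dire le lieu où la classe de $t$ est inversible; via le scindage \eqref{scindage} cette classe engendre le facteur $\mathcal{I}/\mathcal{I}^{2}$. Tordre par la droite $\mathcal{O}_{D}(D)$ — opération licite puisque la projectivisation est insensible à une telle torsion et que $\Proj$ est local sur $D$ — fournit $E_{[D]}\simeq \mathbf{P}(i^{\ast}\mathcal{N}_{X/Y}^{\vee}\otimes \mathcal{O}_{D}(D)\oplus \mathcal{O}_{D})$, la condition définissant $E_{(D)}$ se lisant alors: ne pas rencontrer le facteur $\mathcal{O}_{D}$ placé en degré $1$, autrement dit être le complémentaire de l'hyperplan à l'infini. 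D'après \cite[8.4.1]{EGA2} ce complémentaire est le fibré vectoriel $\mathbf{V}(i^{\ast}\mathcal{N}_{X/Y}^{\vee}\otimes \mathcal{O}_{D}(D))$, et l'égalité $i^{\ast}g^{\ast}=(g\circ i)^{\ast}$ donne la conclusion.

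Le point délicat me semble être le recollement des identifications locales en une identification canonique globale. Celles-ci dépendent a priori du choix de $t$, mais ce dernier n'est déterminé qu'à un inversible près puisque $D$ est de Cartier, et c'est précisément la torsion par $\mathcal{O}_{D}(D)$ qui absorbe cette indétermination. Il faudrait donc veiller à ce que le facteur « à l'infini » soit bien le sous-fibré canonique $\mathcal{O}_{D}$, et non un twist non trivial, afin que le fibré vectoriel obtenu soit intrinsèque et indépendant de toute coordonnée.
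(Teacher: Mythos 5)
Votre esquisse reproduit fidèlement l'argument du texte : mêmes ingrédients ($i$ régulière par \cite[21.2.12]{EGA4}, suite conormale scindée par la section $f^{\sharp}$, identification $E_{[D]}\simeq\mathbf{P}(\mathcal{J}/\mathcal{J}^{2})$, torsion par $\mathcal{O}_{D}(D)$ rendue licite par la localité de $\Proj$ sur $D$, puis \cite[8.4.1]{EGA2} pour reconnaître le complémentaire de l'hyperplan à l'infini comme fibré vectoriel), et votre remarque finale sur l'absorption de l'indétermination de $t$ par la torsion est exactement le point que le texte traite en observant que $\mathcal{I}/\mathcal{I}^{2}\otimes\mathcal{O}_{D}(D)$ est canoniquement trivial. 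La démonstration est correcte et suit essentiellement la même route que celle du papier.
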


\subsection{Enoncé du théorème}
Soit $S$ un trait complet de corps résiduel $\mathds{K}$. Le choix d'une uniformisante $x$ de $S$ induit une identification $S\simeq \text{Spec }\mathds{K}\llbracket x \rrbracket$. 
Soient $n \geq 1$ et $k \geq 1$ des entiers. On pose $r=k/n$, $t=x^{1/n}$ et on note $D_{k}$ le diviseur de degré $k$ de $S_{n}=  \text{Spec }\mathds{K}\llbracket t \rrbracket$. Soient $s_{n}$ le point fermé de $S_{n}$, $\eta_{n}$ son point générique et $\gamma_{n}:S_{n}\rightarrow S$ le morphisme d'élévation à la puissance $n$. Soit $S_{1,n}$ le complété de $S \times S_{n}$ en l'origine. Le graphe de $\gamma_{n}$ induit une immersion fermée $\Gamma_{n} : S_{n} \rightarrow S_{1,n}$. Pour la structure de $S_{n}$-schéma sur $S_{1,n}$ donnée par la seconde projection, on définit $S_{1,n}(D_{k})$ comme le dilaté de $S_{1,n}$ en $\Gamma_{n}(D_{k})$ relativement à $D_{k}$. On en déduit suivant \ref{prologue} le diagramme commutatif à carrés cartésiens
\begin{equation} \label{construction AS}
\begin{array}{c}
\xymatrix{ 
T_{D_{k}} \ar[r]^-{i_{n,k}}  \ar[d] 
& S_{1,n}(D_{k}) \ar[d]_{\pi}
& S \times \eta_{n}\ar[d] \ar[l]_-{j_{n,k}} \\  
D_{k} \ar[r]^{i_{k}}
& S_{n}     
&\eta_{n} \ar[l] 
}
\end{array}
\end{equation} 
avec une identification canonique
\begin{equation*} 
\begin{array}{c}
\xymatrix{ 
T_{D_{k}} \ar[r]^-{\sim} &\mathbf{V}((\Gamma_{n}\circ i_{k})^{\ast}\Omega^{1}_{S_{1,n}/S_{n}}\otimes \mathcal{O}_{D_{k}}(D_{k}))
}.
\end{array}
\end{equation*} 
Concrètement, $\Gamma_{n}(D_{k})$ est le sous-ensemble algébrique de $S_{1,n}$ donné par l'idéal $\mathcal{J}=\left(x-t^{n}, t^{k}\right)$. Le choix des variables $y_{0}=x-t^{n}$ et $y_{1}=t^{k}$ placées en degré $1$ fournit une présentation de l'algèbre éclatée de $S_{1,n}$ en $\mathcal{J}$, soit encore un plongement du schéma associé dans  $S_{1,n} \times \mathds{P}^{1}$. Suivant \ref{prologue}, le dilaté $S_{1,n}(D_{k})$ en est l'ouvert affine $y_{1} \neq 0$, donné dans $S_{1,n} \times \mathds{A}^{1}$ par l'équation $x-t^{n}-t^{k}y=0$, où l'on a posé $y=y_{0}/y_{1}$. D'autre part, si on note $\mathcal{O}_{s_{n}}(D_{k})$ la restriction à $s_n$ de $\mathcal{O}_{D_{k}}(D_{k})$, le choix des coordonnées $x$ et $t$ fournit les identifications  
$$
\Omega_{S_{1,n}/S_{n}}^{1}\simeq \mathds{K}\llbracket x,t \rrbracket \cdot dx \text{\quad et \quad}
\mathcal{O}_{s_{n}}(D_{k})\simeq  \mathds{K}\cdot\frac{1}{x^{r}}\hspace{0.5em},
$$
où $x^{r}$ désigne par convention $t^{k}$. Notons $T_{r}$ le réduit\footnote{Dans \cite{AS}, le foncteur des cycles proches est à valeur dans la catégorie dérivée des faisceaux sur $T_{D_{k},\text{ét}}$. Le fibré $T_{D_{k}}$ est un schéma sur $D_{k}$, non réduit en général. Par invariance du site étale par homéomorphisme universel \cite[Exp VIII]{SGA4}, on peut tout aussi bien se placer sur le réduit $T_{D_{k}}^{\red}$ qui est un schéma sur le point fermé $s_{n}$ de  $S_{n}$. C'est le point de vue qui doit être adopté lorsqu'on considère les cycles proches pour les $\mathcal{D}$-modules.} de $T_{D_{k}}$, et relions $\frac{dx}{x^{r}}$ au choix de la coordonnée $y$ sur $S_{1,n}(D_{k})$. Dans la situation présente, la suite exacte \eqref{conormaux} s'explicite en
\begin{equation*} 
\begin{array}{c}
\xymatrix{ 
0 \ar[r]
&(x-t^{n})/((x-t^{n})^{2},t^k(x-t^n)) \ar[r]
&\mathcal{J}/\mathcal{J}^{2} \ar[r]
&(t^{k})/(t^{k})^{2} \ar[r]
& 0
},
\end{array}
\end{equation*}
de sorte que \eqref{scindage} devient $\mathcal{J}/\mathcal{J}^{2}\simeq (y_{0})\oplus (y_{1})$. Via l'isomorphisme $\mathcal{N}_{S_{n}/S_{1,n}}^{\vee} \simeq \Gamma_{n}^{\ast}\Omega^{1}_{S_{1,n}/S_{n}}$, la coordonnée $y_{0}$ correspond à la classe de la forme différentielle $d(x-t^{n})$, soit encore la classe de $dx$. On en déduit que, vu dans 
\begin{equation}\label{la fibre spéciale}
T_{r}=\mathbf{V}((\Gamma_{n}\circ i^{\red}_{k})^{\ast}\Omega^{1}_{S_{1,n}/S_{n}}\otimes \mathcal{O}_{s_{n}}(D_{k})),
\end{equation} 
la coordonnée $y=y_{0}/y_{1}$ correspond exactement à $\frac{dx}{x^{r}}$. C'est par rapport à cette coordonnée privilégiée de la droite $T_{r}$ que se feront tous les calculs. \\ \indent
Soit $\mathcal{M}$ un $\mathds{K}((x))$-module différentiel. Le protagoniste de cet article est le $\mathcal{D}$-module sur $S_{1,n}(D_{k})$ 
$$
H_{n,k}\left( \mathcal{M} \right):=j_{k,n+}\mathcal{H}om(p^{+}_{2}\gamma^{+}_{n}\mathcal{M}, p^{+}_{1}\mathcal{M}),
$$
où $p_{1} :S \times \eta_{n}\longrightarrow S$ et $p_{2}:S \times \eta_{n}\longrightarrow \eta_{n}$ sont les projections canoniques. \\ \indent
On rappelle que le théorème de Levelt-Turrittin \cite{SVDP} assure l'existence d'un entier $m$ et d'une extension galoisienne finie $\mathds{L}$ de $\mathds{K}$ tels que 
\begin{equation} \label{LT}
\mathds{L}((u))\otimes_{\mathds{K}((x))}\mathcal{M} \simeq \displaystyle{\bigoplus_{\omega\in \mathds{L}[\frac{1}{u}]\frac{1}{u}}} \mathcal{E}^{\omega} \otimes  \mathcal{R}_{\omega}
\end{equation}
avec $u=x^{1/m}$, $\mathcal{E}^{\omega}=(\mathds{L}((u)), d+d\omega)$ et $\mathcal{R}_{\omega}$ régulier de rang noté $n_{\omega}$. Le plus petit entier $m$ tel que \eqref{LT} ait lieu est \textit{l'indice de ramification de $\mathcal{M}$}. On le notera $m_{\mathcal{M}}$. \\ \indent 
Posons $\mathds{A}^{\mathds{Q}_{>0}}_{\mathds{K}}=\Spec \mathds{K}[X_r,r\in\mathds{Q}_{>0}]$. Cet espace est muni de projections 
$$c_r:\mathds{A}^{\mathds{Q}_{>0}}_{\mathds{K}}\longrightarrow \mathds{A}_{\mathds{K}}(r):=\Spec \mathds{K}[X_r]$$ 
qui sont telles que les orbites sous $G_{\mathds{K}}$ de polynômes $\omega\in u^{-1}\overline{\mathds{K}}[u^{-1}]$ sont en bijection avec les points fermés de $\mathds{A}^{\mathds{Q}_{>0}}_{\mathds{K}}$ envoyés sur $0$ par tous les $c_r$ sauf un nombre fini de $c_{k/m}, k\in \mathds{Z}_{>0}$. \\ \indent
Pour tout $r\in\mathds{Q}_{>0}$, on définit $\Omega_{r}(\mathcal{M})$ comme le fermé de $\mathds{A}^{\mathds{Q}_{>0}}_{\mathds{K}}$ constitué des telles orbites de polynômes  $\omega\in u^{-1}\overline{\mathds{K}}[u^{-1}]$ de degré $r$ par rapport à la variable $1/x$ apportant une contribution non-nulle à \eqref{LT}. Notons enfin $\Omega_{<r}(\mathcal{M})$ pour $\sqcup_{r'<r} \Omega_{r'}(\mathcal{M})$. \\ \indent
On suppose $r>1$ et soit $\omega \in \Omega_{r-1}(\mathcal{M})$. On définit par $[\omega]$ l'image de $\omega$ par la composée
\[
\xymatrixcolsep{3pc}
\xymatrix{
\mathds{A}^{\mathds{Q}_{>0}}_{\mathds{K}}\ar[r]^-{c_{r-1}} &  \mathds{A}_{\mathds{K}}(r-1)\ar[r]^-{(1-r)\times} & \mathbf{V}(\mathds{K}\cdot y^{\vee })\simeq T_{r} ^{\vee }
},
\]
où  $(1-r)\times$ est le morphisme de schéma déduit du morphisme de $\mathds{K}$-algèbre associant $(1-r)X_{r-1}$ à $y^{\vee }$. Le point $[\omega]$ est un point fermé de $T_{r}^{\vee }$. On montre aisément le 
\begin{lemme}
Le point $[\omega]$ est indépendant du choix des uniformisantes $x$ et $t$.
\end{lemme}
Notons $\psi_{\pi}$ le foncteur des cycles proches\footnote{Pour une définition précise, voir \ref{sectioncycleproche}.} par rapport à $\pi$ pour les modules holonomes sur $S_{1,n}(D_{k})$. Le but de ce texte est de démontrer le
\begin{theorem} \label{théorème}
On suppose que $r>1$. Alors, le $\mathcal{D}_{T_{r}}$-module $\psi_{\pi} H_{n,k}(\mathcal{M})$ ne dépend de $n$ et $k$ que par l'intermédiaire de $r$, et avec les notations de 1.4, on a la formule
\begin{equation}\label{formule}
\mathfrak{F}\psi_{\pi} H_{n,k}(\mathcal{M})= 
\delta_{0}^{n_{<r-1}^{2}(\mathcal{M})}\oplus\bigoplus_{\omega \in \Omega_{r-1}(\mathcal{M})}\delta_{[\omega]}^{[\mathds{K}(\omega):\mathds{K}([\omega])]n_{\omega}^{2}},
\end{equation}
où $n_{<r-1}^{2}(\mathcal{M})$ est l'entier $\displaystyle{\sum_{\omega \in \Omega_{<r-1}(\mathcal{M})}[\mathds{K}(\omega):\mathds{K}]n_{\omega}^{2}}$.
\end{theorem}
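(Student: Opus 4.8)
Le plan est de se ramener d'abord au cas $\mathds{K}=\mathds{C}$, puis à une forme décomposée de $\mathcal{M}$, et d'achever par un calcul explicite sur le dilaté. La réduction à $\mathds{K}=\mathds{C}$ relève du principe de Lefschetz : l'énoncé \eqref{formule} est de nature algébrique et ne fait intervenir qu'un nombre fini de données, de sorte qu'on peut supposer $\mathds{K}$ de type fini sur $\mathds{Q}$, puis le plonger dans $\mathds{C}$. On dispose alors du formalisme transcendant des cycles proches et de la transformation de Fourier. L'observation structurante est que $\mathcal{M}\mapsto H_{n,k}(\mathcal{M})$ est \emph{quadratique} en $\mathcal{M}$ à travers le $\mathcal{H}om$ interne : une décomposition de Levelt-Turrittin \eqref{LT} fournit un scindage de $\mathcal{H}om(p_2^{+}\gamma_n^{+}\mathcal{M}, p_1^{+}\mathcal{M})$ en somme, indexée par les couples de formes $(\omega,\omega')$, des facteurs $\mathcal{H}om(p_2^{+}\gamma_n^{+}(\mathcal{E}^{\omega'}\otimes\mathcal{R}_{\omega'}),\, p_1^{+}(\mathcal{E}^{\omega}\otimes\mathcal{R}_{\omega}))$. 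Ce sont ces facteurs, et leur comportement diagonal, qui produiront les multiplicités $n_\omega^2$.

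La deuxième étape consiste à substituer à $\mathcal{M}$ ses facteurs élémentaires tout en contrôlant l'effet sur $\psi_\pi$ et $\mathfrak{F}$. La décomposition \eqref{LT} n'ayant lieu qu'après l'extension galoisienne $\mathds{L}/\mathds{K}$ et la ramification $t=x^{1/m}$, on factorise le changement de base correspondant en morphismes finis et l'on exploite l'adjonction $\mathcal{M}\to p_{+}p^{+}\mathcal{M}$ de \ref{adjonctionO*module}. Il faut alors établir que $\psi_\pi$ et $\mathfrak{F}$ commutent de façon contrôlée aux images directes par ces morphismes finis ainsi qu'à l'action du groupe de Galois et du groupe $\mu_m$ des racines de l'unité. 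C'est là le point le plus délicat : les cycles proches ne commutent pas formellement à la construction géométrique d'Abbes et Saito, et il faut suivre pas à pas la façon dont la dilatation $S_{1,n}(D_k)$ et le foncteur $j_{k,n+}$ sont affectés par ces opérations. La descente par le groupe fini fournira \emph{in fine} le facteur $[\mathds{K}(\omega):\mathds{K}([\omega])]$ mesurant le défaut entre le corps de définition de $\omega$ et celui du point $[\omega]$.

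Une fois $\mathcal{M}$ décomposé, on calcule directement sur $S_{1,n}(D_k)$, d'équation $x=t^n+t^k y$. Le facteur exponentiel du terme d'indice $(\omega,\omega')$ a pour exposant, après image directe par $j_{k,n+}$,
\[
\omega(t^n+t^k y)-\omega'(t^n).
\]
Son coefficient en $y^{0}$ est $\omega(t^n)-\omega'(t^n)$ : pour $\omega\neq\omega'$ il a un pôle en $t=0$, le module correspondant est purement irrégulier le long de $t=0$, et les lemmes d'annulation donnent $\psi_\pi=0$. Seuls subsistent les termes diagonaux $\omega=\omega'$. Pour un tel terme, lorsque $\deg\omega=r-1$ de coefficient dominant $c_{r-1}(\omega)$, la relation $k=nr$ montre que la restriction à $t=0$ de l'exposant vaut exactement $(1-r)\,c_{r-1}(\omega)\,y$ ; cette restriction est nulle si $\deg\omega<r-1$, et l'exposant est irrégulier (donc $\psi_\pi=0$) si $\deg\omega>r-1$. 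On lit en particulier sur ce calcul que le résultat ne dépend de $(n,k)$ qu'à travers $r$.

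Il reste à passer de l'exposant restreint à la fibre spéciale au membre de droite de \eqref{formule}. Sur $T_r$, le terme diagonal d'une forme $\omega$ de degré $r-1$ donne, après $\psi_\pi$, le module $\mathcal{E}^{(1-r)c_{r-1}(\omega)y}$ affecté de la multiplicité $n_\omega^2$ provenant du $\mathcal{H}om$ régulier $\mathcal{H}om(\mathcal{R}_\omega,\mathcal{R}_\omega)$ ; comme $\mathfrak{F}\mathcal{E}^{\lambda y}=\delta_\lambda$, on obtient $\delta_{[\omega]}^{\,n_\omega^2}$, puis $\delta_{[\omega]}^{\,[\mathds{K}(\omega):\mathds{K}([\omega])]n_\omega^2}$ après descente au corps $\mathds{K}$. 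Le terme diagonal d'une forme de degré $<r-1$ donne un module régulier trivial de rang $n_\omega^2$, de transformée de Fourier $\delta_0^{\,n_\omega^2}$, d'où la contribution globale $\delta_0^{\,n_{<r-1}^2}$. En sommant sur toutes les formes on obtient \eqref{formule}. La difficulté principale reste le contrôle des cycles proches à travers les réductions de la deuxième étape, les calculs de cette dernière étape étant ensuite explicites.
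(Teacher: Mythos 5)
Votre stratégie est exactement celle du papier~: réduction à $\mathds{K}=\mathds{C}$ par le principe de Lefschetz, réduction au cas décomposé en contrôlant les cycles proches à travers des revêtements finis et des actions de groupes, lemmes d'annulation pour les termes non diagonaux et pour les termes diagonaux de degré $>r-1$, calcul explicite des termes diagonaux restants (votre résultat $(1-r)c_{r-1}(\omega)y$ avec multiplicité $n_{\omega}^{2}$, et la contribution $\delta_{0}$ des degrés $<r-1$, coïncident avec ceux du papier). Mais il y a un vrai trou à l'endroit précis que vous qualifiez vous-même de \og point le plus délicat\fg{} et que vous ne résolvez pas~: l'exposant $\omega(t^{n}+t^{k}y)-\omega'(t^{n})$ n'a pas de sens sur $S_{1,n}(D_{k})$, car $\omega$ est un polynôme en $1/x^{1/n}$ et son évaluation en $x=t^{n}+t^{k}y$ exige une racine $n$-ième de $p=1+yt^{k-n}$, qui n'existe pas sur ce schéma. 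Le papier construit pour cela un revêtement $S_{1,n}(D_{k})^{\circ}$ d'équation $z^{n}=p$ (obtenu comme dilatation d'un changement de base ramifié), montre que $S_{1,n}(D_{k})[p^{-1}]$ en est le quotient par $U_{n}$, établit que $\psi_{f}\mathcal{M}$ s'identifie aux $G$-invariants de $p_{+}\psi_{f\circ p}\mathcal{H}^{0}p^{+}\mathcal{M}$ (ce qui repose sur l'identification de $\mathcal{M}$ aux $G$-invariants de $(p_{+}p^{+}\mathcal{M})(\ast Z)$), puis formalise le long de la fibre spéciale pour trivialiser le revêtement et fixer une branche de la racine. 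C'est là que réside l'essentiel du travail technique, et votre calcul final n'est légitime qu'une fois ces étapes acquises.

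Trois points secondaires. D'abord, la réduction au cas où $n$ est multiple de l'indice de ramification $m_{\mathcal{M}}$ n'est pas traitée~: dans le papier elle passe par la décomposition en sous-espaces propres de $p_{+}H_{mn,mk}(\mathcal{M})$ sous $U_{m}$, un calcul de polynômes de Bernstein donnant $(\psi_{\pi}H_{n,k}(\mathcal{M}))^{m}\simeq(\psi_{\pi}H_{mn,mk}(\mathcal{M}))^{m}$, et la semi-simplicité du membre de droite pour simplifier par $m$. Ensuite, le facteur $[\mathds{K}(\omega):\mathds{K}([\omega])]$ ne provient pas de la descente par le groupe des racines de l'unité mais de l'étape de Lefschetz elle-même~: sur $\mathds{C}$ la formule n'a pas de tel facteur, et il apparaît en redescendant à $\mathds{K}$, où il compte les points de $\omega$ au-dessus d'un point donné de $[\omega]$ après extension des scalaires. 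Enfin, dire que le terme non diagonal est \og purement irrégulier le long de $t=0$\fg{} ne suffit pas à annuler $\psi_{\pi}$~: les lemmes d'annulation exigent des hypothèses précises sur le numérateur de l'exposant (ne pas s'annuler à l'origine, ou y avoir un zéro simple en $y$), qu'il faut vérifier cas par cas selon la comparaison des degrés $q_{1}$ et $q_{2}$, ce que le papier fait explicitement.
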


\subsection{}
Soit $R$ un anneau de valuation discrète complet d'égale caracté\-ristique $p$, d'idéal maximal $\mathfrak{M}$ et de corps résiduel $F$, supposé de type fini sur un corps parfait. On note $K$ le corps de fraction de $R$. Soit $S= \Spec R$ le trait complet associé à $R$ et $\eta_{S}$ son point générique. On se donne un entier $n$ multiple de $p$, un caractère $\chi \in H^{1}(K,\mathds{Z}/n\mathds{Z})$ et pour un nombre premier $\ell \neq p$, on fixe une injection $\mathds{Z}/n\mathds{Z}\rightarrow  \overline{\mathds{F}}_{\ell}^{\times}$. On note encore $\chi : G_{K} \rightarrow \overline{\mathds{F}}_{\ell}^{\times}$ le caractère induit, et $\mathcal{F}$ le $\overline{\mathds{F}}_{\ell}$-faisceau étale associé sur $\eta_{S}$.\\ \indent 
Si le conducteur de Swan $\sw(\chi)$ de $\mathcal{F}$ vérifie $\sw(\chi)>1$, Abbes et Saito démontrent \cite[9.10]{AS} que le support de $\mathfrak{F}\psi H_{1, \sw(\chi)+1}(\mathcal{F})$ est réduit à la forme différentielle tordue 
\[
\xymatrix{ 
\rsw(\chi):F \ar[r] & \Omega_{R}^{1}\otimes_{R}(\mathfrak{M}^{-\sw(\chi)-1}/\mathfrak{M}^{-\sw(\chi)})
}
\]
donnée par la théorie de la ramification des caractères d'Artin-Schreier-Witt de Kato \cite[10]{AS}. Le théorème \ref{théorème} pour $\mathcal{M}$ de type exponentiel est l'analogue de \cite[9.10]{AS} pour $F$ parfait. \\ \indent
Quant à la finitude du support de $\mathfrak{F}\psi_{\pi} H_{n,k}(\mathcal{M})$ en général (et le fait que celui-ci ne rencontre pas l'origine lorsque $\mathcal{M}$ est purement de pente $r'>0$ et $r=r'+1$ dans \ref{théorème}), il s'agit de l'analogue de \cite[9.15]{AS}.
\subsection{}
Puisque la construction fait aussi sens lorsque $k\leq n$, on peut se demander ce qu'elle donne  dans ce cas. On montre en \ref{nullité} qu'il n'y a pas grand chose à en attendre, puisque dans le cas particulier le plus simple où $\mathcal{M}$ est décomposé sans partie régulière, on a toujours $\psi_{\pi} H_{n,k}(\mathcal{M})\simeq 0$.

\section{Quelques lemmes sur les cycles proches}
\subsection{Le cas complexe. Généralités et exemples}\label{sectioncycleproche}
Pour les références historiques concernant les cycles proches pour les $\mathcal{D}$-modules, on pourra consulter \cite{Kashpsi} et \cite{Malpsi}. Comme référence de travail, on utilisera \cite{MM}. \\ \indent
Dans toute cette section, $X$ désigne une variété algébrique complexe lisse, $f:X\rightarrow \mathds{A}^{1}_{\mathds{C}}$ un morphisme lisse de fibre spéciale $Y=f^{-1}(0)$, $i:Y \longrightarrow X$ l'inclusion de $Y$ dans $X$ et $\mathcal{I}$ l'idéal de définition de $Y$. Soit
$$
V_{k}(\mathcal{D}_{X})=\{P \in \mathcal{D}_{X},  P(\mathcal{I}^{l})\subset \mathcal{I}^{l-k} \quad \forall l \in \mathds{Z}\}
$$
la $V$-filtration de $\mathcal{D}_{X}$, et soit $\mathcal{M}$ un $\mathcal{D}_{X}$-module spécialisable le long de $Y$ (par exemple un module holonome). Alors on dispose pour toute $V$-filtration $U$ localement image à décalage près de $V_{\textbf{.}}(\mathcal{D}_{X})^{p}$ par une surjection locale $\mathcal{D}_{X}^{p}\longrightarrow \mathcal{M}\longrightarrow 0$ (c'est la propriété de bonté d'une $V$-filtration) d'un unique polynôme unitaire $b_{U}$ vérifiant pour tout $k \in \mathds{Z}$ 
$$
b_{U}(t\partial_{t}+k)U_{k} \in U_{k-1} ,
$$
avec $t$ équation locale de $Y$. On dit que $b_{U}$ est le \textit{polynôme de Bernstein de $(U_{k})_{k \in \mathds{Z}}$}. Il est indépendant du choix de l'équation locale de $Y$. Puisqu'un sous-module d'un module spécialisable est encore spécialisable, on peut définir pour $m \in \mathcal{M}$ le polynôme de Bernstein de $m$ comme le polynôme de Bernstein de la bonne $V$-filtration $V_{\textbf{.}}(\mathcal{D}_{X})m$ sur $\mathcal{D}_{X}m$. On notera $b_{m}$ ce polynôme, et $\ord_{Y}(m)$ l'ensemble de ses racines. \\ \indent
Soit $\geq$ l'ordre l'exicographique sur $\mathds{C}\simeq \mathds{R}+ i\mathds{R}$. Pour $a \in \mathds{C}$, on définit 
$$
V_{a}(\mathcal{M})=\{m \in \mathcal{M},  \ord_{Y}(m) \subset \{ \alpha \in \mathds{C}, \alpha \geq -a-1 \}\}
$$
et 
$$
V_{<a}(\mathcal{M})=\{m \in \mathcal{M},  \ord_{Y}(m) \subset \{ \alpha \in \mathds{C}, \alpha > -a-1 \}\}.
$$
D'après \cite[4.3-5]{MM}, $(V_{a+k}(\mathcal{M}))_{k \in \mathds{Z}}$ (resp. $(V_{<a+k}(\mathcal{M}))_{k \in \mathds{Z}}$ est l'unique bonne $V$-filtration de $\mathcal{M}$ dont les racines du polynôme de Bernstein sont dans l'intervalle $[- a-1, -a[$ (resp. $]- a-1, -a]$). Si $\psi_{f,a}\mathcal{M}$ désigne la quotient $V_{a}(\mathcal{M})/V_{<a}(\mathcal{M})$, on pose 
\begin{equation} \label{psi}
\psi_{f}\mathcal{M}:= \displaystyle{\bigoplus_{-1 \leq a<0}}\psi_{f,a}\mathcal{M}.
\end{equation}
\begin{proposition}\label{MSaitopsi}
Soit $\mathcal{E}$ une connexion algébrique sur $X$ et $\mathcal{M}$ un $\mathcal{D}_{X}$-module holonome. Alors, on a une identification canonique $\psi_{f} (\mathcal{E}\otimes \mathcal{M}) \simeq i^{+}\mathcal{E}\otimes \psi_{f} \mathcal{M}$.
\end{proposition}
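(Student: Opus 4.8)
Le plan est de ramener l'énoncé à l'assertion que tensoriser par $\mathcal{E}$ commute à la prise de $V$-filtration, c'est-à-dire $V_{a}(\mathcal{E}\otimes\mathcal{M})=\mathcal{E}\otimes_{\mathcal{O}_{X}}V_{a}(\mathcal{M})$ pour tout $a$. La question étant locale, je me placerais sur un ouvert où $Y=\{t=0\}$ et où $\mathcal{E}$ est trivialisé par des sections $(s_{i})$; comme $\mathcal{E}$ est une connexion sur $X$, donc sans pôle le long de $Y$, on a $\nabla_{\partial_{t}}s_{i}=\sum_{j}(A_{t})_{ji}s_{j}$ avec $(A_{t})_{ji}\in\mathcal{O}_{X}$, et plus généralement $\nabla_{\xi}$ préserve $\mathcal{E}$ pour tout champ de vecteurs $\xi$.

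Je poserais $U_{a}:=\mathcal{E}\otimes_{\mathcal{O}_{X}}V_{a}(\mathcal{M})$ et vérifierais d'abord qu'il s'agit d'une bonne $V$-filtration de $\mathcal{E}\otimes\mathcal{M}$ (qui est holonome, donc spécialisable, comme produit tensoriel d'un module holonome par une connexion). La régularité de $\mathcal{E}$ entraîne $V_{j}(\mathcal{D}_{X})\cdot U_{a}\subset U_{a+j}$, et la platitude de $\mathcal{E}$ fournit l'exhaustivité, la séparation et la cohérence du gradué à partir de celles de $V_{\bullet}(\mathcal{M})$.

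Le c\oe ur de l'argument est le contrôle de l'opérateur $\theta=t\partial_{t}$ sur $\mathcal{E}\otimes\mathcal{M}$. La règle de Leibniz donne
\[
\theta(e\otimes m)=\nabla_{\partial_{t}}e\otimes tm+e\otimes(t\partial_{t}m),
\]
et comme $t\cdot V_{a}(\mathcal{M})\subset V_{a-1}(\mathcal{M})$, le premier terme définit un opérateur $C:U_{a}\to U_{a-1}$ abaissant la filtration d'un cran. Autrement dit $\theta=1\otimes t\partial_{t}+C$ avec $1\otimes t\partial_{t}$ préservant $U_{\bullet}$. En développant dans cette décomposition le polynôme de Bernstein $b=b_{V_{\bullet}(\mathcal{M})}$ de $\mathcal{M}$, on obtient $b(\theta+k)=1\otimes b(t\partial_{t}+k)+R$ où $R$ abaisse la filtration; la relation de Bernstein de $\mathcal{M}$ donne alors $b(\theta+k)U_{a+k}\subset U_{a+k-1}$. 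Ainsi $b$ est un polynôme de Bernstein pour $(U_{a+k})_{k}$, dont les racines tombent dans le même intervalle que celles de $\mathcal{M}$.

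L'unicité de la bonne $V$-filtration à racines prescrites (\cite[Prop. 4.3-5]{MM}) donne alors $V_{a}(\mathcal{E}\otimes\mathcal{M})=\mathcal{E}\otimes V_{a}(\mathcal{M})$, et de même pour $V_{<a}$. Par platitude de $\mathcal{E}$, il vient $\psi_{f,a}(\mathcal{E}\otimes\mathcal{M})\simeq\mathcal{E}\otimes_{\mathcal{O}_{X}}\psi_{f,a}\mathcal{M}$; ce dernier étant annulé par $\mathcal{I}$, le facteur $\mathcal{E}$ se restreint en $i^{\ast}\mathcal{E}=i^{+}\mathcal{E}$, d'où l'identification voulue après sommation sur $-1\leq a<0$. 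Je m'attends à ce que la principale difficulté soit de mener proprement l'estimation sur $\theta$ ci-dessus et de vérifier que l'isomorphisme obtenu est $\mathcal{D}_{Y}$-linéaire, ce dernier point résultant du suivi des connexions dans la règle de Leibniz.
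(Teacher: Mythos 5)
Votre démonstration suit essentiellement la même route que celle du texte : on pose $U_{a}=\mathcal{E}\otimes_{\mathcal{O}_{X}}V_{a}(\mathcal{M})$, on vérifie qu'il s'agit d'une bonne $V$-filtration, puis la règle de Leibniz et l'absence de pôle de $\nabla_{\partial_{t}}$ sur $\mathcal{E}$ montrent que $b_{V_{a+\textbf{.}}(\mathcal{M})}(t\partial_{t}+k)$ envoie $U_{a+k}$ dans $U_{a+k-1}$, d'où l'identification $V_{a}(\mathcal{E}\otimes\mathcal{M})=\mathcal{E}\otimes V_{a}(\mathcal{M})$ par unicité, et on conclut par platitude. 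Le seul point que vous traitez plus rapidement que le texte est la $V_{0}(\mathcal{D}_{X})$-cohérence de $U_{a}$ (le texte la démontre par une récurrence sur l'ordre des opérateurs, car les $e_{i}\otimes m_{j}$ n'engendrent pas $U_{a}$ de façon évidente sur $V_{0}(\mathcal{D}_{X})$), mais l'argument est correct dans son ensemble.
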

\begin{proof}
Soit $V$ une bonne filtration sur $\mathcal{M}$. Pour $k\in \mathds{Z}$, on pose
$$
U_{k}=\mathcal{E}\otimes V_{k}.
$$
\indent
Montrons qu'il s'agit d'une bonne $V$-filtration de $\mathcal{E}\otimes \mathcal{M}$. On va pour cela utiliser le critère \cite[4.1–9]{MM}. \\ \indent
Pour $k\in \mathds{Z}$, il faut commencer par montrer la $V_{0}(\mathcal{D}_{X})$-cohérence de $U_{k}$. Puisque $V_{0}(\mathcal{D}_{X})$ est un faisceau d'anneaux localement noethérien et cohérent \cite[4.1-5]{MM}, il suffit de montrer la finitude locale de $U_{k}$ sur $V_{0}(\mathcal{D}_{X})$. Soit $m_{1}, \dots  ,m_{n}$ un système de $V_{0}(\mathcal{D}_{X})$-générateurs locaux de $V_{k}$ et $e_{1}, \dots  ,e_{n}$ un système de $\mathcal{O}_{X}$-générateurs locaux de $\mathcal{E}$. On va montrer que les $e_i\otimes m_j$ forment un système de $V_{0}(\mathcal{D}_{X})$-générateurs locaux de $U_{k}$.
On se donne $e \in \mathcal{E}$, $f_{1}, \dots  ,f_{n}$ les coefficients de $e$ dans la base des $(e_{i})$, et $m\in V_{k}$. Pour $P \in V_{0}(\mathcal{D}_{X})$, on désigne par $d(P)$ l'ordre de $P$. On pose alors
$$
d_{m}=\Min\{ \Max(d(P_{j})), m=\sum P_{j}m_{j} \text{\hspace{0.1em} avec \hspace{0.05em} }     P_{j}\in V_{0}(\mathcal{D}_{X}) \}.
$$
Il faut montrer que 
\begin{equation}\label{cohérence}
e\otimes m \in \sum V_{0}(\mathcal{D}_{X})\cdot (e_{i}\otimes m_{j}).
\end{equation}
On raisonne par récurrence sur $d_{m}$, le cas $d_{m}=0$ découlant du fait que le produit tensoriel envisagé est pris sur $\mathcal{O}_{X}$. Si $d_{m}>0$, on choisit des opérateurs $P_{j}$ qui réalisent $d_{m}$ et on écrit
$$
\sum P_{j}(e\otimes m_{j})=e\otimes m+\sum Q_{ij}e\otimes R_{ij}m_{j}
$$
avec $d_{R_{ij}m_{j}}<d_{m}$, de sorte que l'hypothèse de récurrence s'applique à  $Q_{ij}e\otimes R_{ij}m_{j}$. Puisque 
$$\sum P_{j}(e\otimes m_{j})=\sum P_{j}f_{i}(e_{i}\otimes m_{j}) \in \sum V_{0}(\mathcal{D}_{X})\cdot (e_{i}\otimes m_{j}),
$$
on en déduit que \eqref{cohérence} est vraie, d'où la  $V_{0}(\mathcal{D}_{X})$-cohérence de $U_{k}$. 
\\ \indent
Soit $k_{0}\in \mathds{N}$ tel que pour tout $k\in \mathds{N}$
\begin{equation}\label{identité}
V_{k_{0}+k}=V_{k}(\mathcal{D}_{X})V_{k_{0}} \text{\quad et \quad} V_{-k_{0}-k}=V_{-k}(\mathcal{D}_{X})V_{-k_{0}}.
\end{equation}
Montrons que $U$ vérifie les identités analogues. Le cas $k=0$ étant immédiat car $V_{0}(\mathcal{D}_{X})$ contient la fonction unité. On peut donc supposer $k>0$. Il suffit alors de démontrer
\begin{equation}\label{identitéanalogues}
U_{k_{0}+k}=U_{k_{0}+k-1}+\partial_{t} U_{k_{0}+k-1}\text{\quad et \quad} U_{-k_{0}-k}=tU_{-k_{0}-k+1}.
\end{equation}
Seules les inclusions directes posent a priori problème. La seconde relation de \eqref{identitéanalogues} découle immédiatement de \eqref{identité} du fait que le produit tensoriel envisagé est pris sur $\mathcal{O}_{X}$. Prouvons la première relation. Soit $e \in \mathcal{E}$ et $m\in V_{k_{0}+k}$. On choisit $m_{1},m_{2} \in V_{k_{0}+k-1}$ tels que $m=m_{1}+\partial_{t}m_{2}$. Alors
$$
e\otimes m=e\otimes m_{1}+\partial_{t}(e\otimes m_{2})-(\partial_{t}e)\otimes m_{2} \in U_{k_{0}+k-1}+\partial_{t} U_{k_{0}+k-1},
$$
d'où \eqref{identitéanalogues}, et par suite $U$ est une bonne $V$-filtration. \\ \indent
En particulier pour $a\in \mathds{C}$, $U_{k}=\mathcal{E}\otimes V_{a+k}(\mathcal{M})$ définit une bonne filtration de $\mathcal{E}\otimes \mathcal{M}$. Pour $e \in \mathcal{E}$ et $m \in V_{a+k}(\mathcal{M})$, on a par lissité de $\mathcal{E}$
\begin{equation*}
\begin{split}
b_{V_{a+}(\mathcal{M})}(t\partial_{t}+k)(e \otimes m)&\in e \otimes b_{V_{a+}(\mathcal{M})}(t\partial_{t}+k)m+U_{k-1} \subset U_{k-1}.                                    
\end{split}
\end{equation*}
On en déduit que $b_{U}$ divise $b_{V_{a+}(\mathcal{M})}$, donc $V_{a}(\mathcal{E}\otimes \mathcal{M})=\mathcal{E}\otimes V_{a}(\mathcal{M})$. De même $V_{<a}(\mathcal{E}\otimes \mathcal{M})=\mathcal{E}\otimes V_{<a}(\mathcal{M})$ et \ref{MSaitopsi} découle alors de la $\mathcal{O}_{X}$-platitude de $\mathcal{E}$.

\end{proof}
Si $Y$ est non lisse, le formalisme précédent ne s'applique pas tel quel. On peut néanmoins toujours définir des cycles proches dans ce cas en plongeant $X$ dans $X\times \mathds{A}^{1}_{\mathds{C}}$ via l'application graphe de $f$ notée $\Gamma(f)$, puis en prenant les cycles proches suivant la projection par rapport au second facteur. On obtient alors un $\mathcal{D}$-module à support dans $X$ et dans le cas où $Y$ est lisse on retrouve bien la définition initiale. \\ \indent
On peut aussi définir le foncteur $\psi$ lorsque le corps de base est un corps $\mathds{K}$ de caractéristique $0$ quelconque. Soit $X$ une variété lisse sur $\mathds{K}$ et soit $Y$ une hypersurface lisse de $X$ donnée comme lieu des zéros d'une fonction $f$.
Soit $\mathcal{M}$ un $\mathcal{D}_{X}$-module holonome, et $\sigma : \overline{\mathds{K}}/\mathds{Z} \longrightarrow \overline{\mathds{K}}$ une section de la projection $\overline{\mathds{K}}\longrightarrow  \overline{\mathds{K}}/\mathds{Z}$ telle que la classe de $0$ soit envoyée sur $1$. En mimant les propositions 4.2-6 et 4.3-5 de \cite{MM}, on obtient le
\begin{lemme}\label{second point de vue}
Il existe une unique bonne $V$-filtration $V^{\sigma}(\mathcal{M})$ de $\mathcal{M}$ dont les racines du polynôme de Bernstein sont dans l'image de $\sigma$.
\end{lemme}
Suivant \cite[4.3.9]{MM}, on est amené à définir
$$
\psi_{f}\mathcal{M}:=
V_{-1}(\mathcal{M})/V_{-2}(\mathcal{M}).
$$
Cette définition est indépendante du choix de $\sigma$ à isomorphisme non canonique près. 
Comme application immédiate de \ref{second point de vue}, on observe que le foncteur $\psi_{f}$ commute à l'extension des scalaires.
\subsection{Quelques compatibilités}
Tout comme dans la situation topologique, les cycles proches sont compatibles au changement de trait. C'est l'objet de la
\begin{proposition}\label{t^{r}}
Soit $n\geq 1$ un entier et $\gamma_n:\mathds{A}^{1}_{\mathds{C}} \longrightarrow \mathds{A}^{1}_{\mathds{C}}$ le morphisme d'élévation à la puissance $n$. Soit $f^{\prime}:X^{\prime}\longrightarrow \mathds{A}^{1}_{\mathds{C}}$ rendant cartésien le diagramme 
\[
\xymatrix{ 
X^{\prime} \ar[r]^{p} \ar[d]_-{f^{\prime}} &  X \ar[d]^-{f} \\
\mathds{A}^{1}_{\mathds{C}}\ar[r]_{\gamma_n}      &   \mathds{A}^{1}_{\mathds{C}}
}
\]
Alors, on a  une identification canonique de $\mathcal{D}_{Y}$-modules
$$
\psi_{f^{\prime}}p^{+}\mathcal{M} \simeq \psi_{f}\mathcal{M}
$$
\end{proposition}
\noindent
Pour la preuve de ceci, voir \cite[2.3.3]{WTSabbah}. Selon \cite[14.10]{Stokes}, les cycles proches sont aussi compatibles à la formalisation le long de $Y$, à savoir qu'on a la
\begin{proposition}\label{formalisation}
Soit $\hat{f}:\hat{X}\rightarrow \hat{\mathds{A}^{1}_{\mathds{C}}}$ la formalisation de $f$ le long de $Y$. Alors, pour tout $\mathcal{D}_{X}$-module spécialisable $\mathcal{M}$, le $\mathcal{D}_{\hat{X}}$-module $\hat{\mathcal{M}}$ est spécialisable et on a une identification canonique $\psi_{f}\mathcal{M} \simeq \psi_{\hat{f}}\hat{\mathcal{M}}$. 
\end{proposition}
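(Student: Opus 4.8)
Le plan est de ramener l'énoncé à une vérification sur les gradués de la $V$-filtration, sur lesquels la formalisation agit trivialement. La question étant locale au voisinage de $Y$, on peut supposer $Y$ lisse, défini par une coordonnée $t$, et interpréter la formalisation le long de $Y$ comme le changement de base par le morphisme de complétion $\mathcal{I}$-adique $\mathcal{O}_X\rightarrow\mathcal{O}_{\hat X}$. Ce morphisme est plat et induit des isomorphismes $\mathcal{O}_{\hat X}/\mathcal{I}^{l}\mathcal{O}_{\hat X}\simeq\mathcal{O}_X/\mathcal{I}^{l}$; en particulier $\mathcal{O}_{\hat X}\otimes_{\mathcal{O}_X}N\simeq N$ pour tout $\mathcal{O}_Y$-module $N$. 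Les opérateurs $t$, $\partial_{t}$ et $t\partial_{t}$ se prolongent à $\hat X$ et l'on a $V_{\textbf{.}}(\mathcal{D}_{\hat X})=\mathcal{O}_{\hat X}\otimes_{\mathcal{O}_X}V_{\textbf{.}}(\mathcal{D}_X)$. Le cas où $Y$ n'est pas lisse s'en déduira via le plongement par le graphe de $f$, la formalisation commutant à cette opération.

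Fixons $a\in\mathds{C}$ et considérons la bonne $V$-filtration $U_{k}=V_{a+k}(\mathcal{M})$. On posera $\hat U_{k}=\mathcal{O}_{\hat X}\otimes_{\mathcal{O}_X}U_{k}$, vu comme sous-module de $\hat{\mathcal{M}}$ grâce à la platitude. La $V_{0}(\mathcal{D}_{\hat X})$-cohérence de $\hat U_{k}$ ainsi que les relations de stabilisation $V_{j}(\mathcal{D}_{\hat X})\hat U_{k_{0}}=\hat U_{k_{0}+j}$ pour $|k_{0}|$ grand résultent des propriétés analogues pour $U_{\textbf{.}}$ par application du foncteur exact $\mathcal{O}_{\hat X}\otimes_{\mathcal{O}_X}-$; le critère \cite[Prop. 4.1-9]{MM} montre alors que $\hat U_{\textbf{.}}$ est une bonne $V$-filtration de $\hat{\mathcal{M}}$.

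Le point crucial est que le polynôme de Bernstein est inchangé. Comme $tU_{k}\subseteq U_{k-1}$, chaque gradué $\mathrm{gr}^{U}_{k}=U_{k}/U_{k-1}$ est annulé par $t$, donc est un $\mathcal{O}_Y$-module. Par platitude $\mathrm{gr}^{\hat U}_{k}\simeq\mathcal{O}_{\hat X}\otimes_{\mathcal{O}_X}\mathrm{gr}^{U}_{k}$, et la remarque initiale fournit un isomorphisme canonique $\mathrm{gr}^{\hat U}_{k}\simeq\mathrm{gr}^{U}_{k}$, compatible à l'action de $t\partial_{t}$. Les polynômes minimaux de $t\partial_{t}+k$ sur ces gradués coïncident donc, de sorte que $b_{\hat U_{\textbf{.}}}=b_{U_{\textbf{.}}}$. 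En particulier les racines de $b_{\hat U_{\textbf{.}}}$ restent dans $[-a-1,-a[$, et l'unicité de la $V$-filtration caractérisée par ses racines de Bernstein \cite[Prop. 4.3-5]{MM} entraîne que $\hat U_{\textbf{.}}$ est exactement la $V$-filtration canonique de $\hat{\mathcal{M}}$. Ceci établit d'un coup la spécialisabilité de $\hat{\mathcal{M}}$ et l'égalité $V_{a}(\hat{\mathcal{M}})=\mathcal{O}_{\hat X}\otimes_{\mathcal{O}_X}V_{a}(\mathcal{M})$, et de même pour $V_{<a}$.

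On conclut en passant aux gradués: $\psi_{\hat f,a}\hat{\mathcal{M}}=V_{a}(\hat{\mathcal{M}})/V_{<a}(\hat{\mathcal{M}})\simeq\mathcal{O}_{\hat X}\otimes_{\mathcal{O}_X}\psi_{f,a}\mathcal{M}\simeq\psi_{f,a}\mathcal{M}$, le dernier isomorphisme provenant de ce que $\psi_{f,a}\mathcal{M}$ est un $\mathcal{O}_Y$-module; en sommant sur $-1\leq a<0$ selon \eqref{psi} on obtient l'identification canonique $\psi_{\hat f}\hat{\mathcal{M}}\simeq\psi_{f}\mathcal{M}$. La principale difficulté réside dans le transfert, par changement de base, de la propriété caractéristique du polynôme de Bernstein: c'est précisément là qu'intervient le fait que les gradués sont des $\mathcal{O}_Y$-modules, sur lesquels la complétion $\mathcal{I}$-adique agit comme l'identité.
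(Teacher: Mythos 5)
Votre démonstration est correcte et suit essentiellement la même stratégie que celle du texte : on munit $\hat{\mathcal{M}}$ de la filtration $\mathcal{O}_{\hat{X}}\otimes_{\mathcal{O}_{X}}V_{a+\textbf{.}}(\mathcal{M})$, on vérifie qu'elle est bonne et que son polynôme de Bernstein a ses racines dans le même intervalle, puis on conclut par unicité de la bonne $V$-filtration canonique et par platitude de $\mathcal{O}_{\hat{X}}$ sur $\mathcal{O}_{X}$. Votre façon de contrôler le polynôme de Bernstein (les gradués, annulés par $t$, sont des $\mathcal{O}_{Y}$-modules insensibles à la complétion) n'est qu'une reformulation du calcul $t\partial_{t}(f\otimes m)\equiv f\otimes t\partial_{t}m \pmod{U_{k-1}}$ effectué dans le texte.
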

\noindent
Si on se donne $n>0$, $\psi_{f}$ est relié suivant  \cite[3.3.13]{PTM} à $\psi_{f^{n}}$ de la façon suivante
\begin{proposition}\label{t^{r}2}
Pour tout $a \in \mathds{C}$, on a une identification canonique 
\[
\xymatrix{ 
\psi_{f^{n},a}\mathcal{M} \ar[r]^-{\sim}   & i_{+}\psi_{f,na}\mathcal{M}
}.
\]
\end{proposition}

\subsection{Quelques critères d'annulation}
On se donne un morphisme propre $p:X\longrightarrow Y$ entre variétés algébriques lisses, et $f:Y\longrightarrow \mathds{A}^{1}_{\mathds{C}}$ une fonction sur $Y$. On note $Z:=f^{-1}(0)$ et on suppose que $p$ est étale au-dessus de $U=Y\setminus Z$. On se donne enfin un $\mathcal{D}_{Y}$-module holonome $\mathcal{M}$ localisé le long de $Z$, à savoir $\mathcal{M} \simeq \mathcal{M}(*Z)$. \\ \indent
Puisque $p$ est étale au-dessus de $U$, le module $\mathcal{H}^{i}p^{+}\mathcal{M}$ est à support dans $p^{-1}(Z)$ pour $i>0$, et ainsi par \cite[4.1]{MT} on a pour $i>0$
$$
\mathcal{H}^{i}p^{+}\mathcal{M} \simeq \mathcal{H}^{i}p^{+}(\mathcal{M}(\ast Z))\simeq \mathcal{H}^{i}(p^{+}\mathcal{M})(\ast p^{-1}(Z))\simeq (\mathcal{H}^{i}p^{+}\mathcal{M})(\ast p^{-1}(Z))\simeq 0
$$ 
Donc $p^{+}\mathcal{M}$ peut être considéré comme un objet de la catégorie des $\mathcal{D}_{X}$-modules, ce qui sera implicitement fait dans la suite. \\ \indent
Toujours puisque $p$ est étale au-dessus de $U$, le module $\mathcal{H}^{i}p_{+}p^{+}\mathcal{M}$ est à support l'hypersurface $Z$ pour $i>0$. Or la relation $\mathcal{M} \simeq \mathcal{M}(*Z)$ donne
$$
p_{+}p^{+}\mathcal{M}\simeq p_{+}((p^{+}\mathcal{M})(\ast p^{-1}(Z)))\simeq (p_{+}p^{+}\mathcal{M})(\ast Z)
$$
où la dernière identification provient de \cite[3.6-4]{Mehbsmf}. On en déduit\footnote{Cette vérification est nécessaire si l'on souhaite utiliser le formalisme des cycles proches exposé ici. Pour un formalisme valable dans un cadre dérivé, on renvoie à \cite{LaurMal}.} que le module $p_{+}p^{+}\mathcal{M}$ est concentré en degré $0$.
\begin{lemme}  \label{groupe fini}
Le module $\psi_{f}  \mathcal{M}$ est nul dès que le module $\psi_{f \circ p}p^{+}\mathcal{M}$ est nul.
\end{lemme}
\begin{proof}
Le morphisme d'adjonction $\mathcal{M} \rightarrow p_{+}p^{+}\mathcal{M}$ est injectif. Par exacti\-tude des cycles proches, $\psi_{f}  \mathcal{M}$ est un sous-objet de $\psi_{f }p_{+}p^{+}\mathcal{M}$, et on conclut à l'aide de la commutation des cycles proches avec l'image directe propre \cite[4.8.1]{MS}.
\end{proof}

Les lemmes d'annulation qui suivent apparaissent déjà dans la littérature  \cite[14.22, 14.26]{Stokes}. On rappelle ici la preuve de \ref{annulation 3} en appendice pour la commodité du lecteur, et on donne une autre preuve de \ref{annulation 2} à l'aide de \ref{groupe fini}.  \\ \indent
Soit $U$ un ouvert de $\mathds{A}_{\mathds{C}}^{2}$ contenant l'origine et $f=f(t,y)$ une fonction régulière sur $U$. Soit $\mathcal{R}$ une connexion sur $U$ méromorphe à singularité régulière le long de $t=0$.
\begin{lemme}  \label{annulation 1}
On suppose que $f(0,0)\neq 0$ ou que $f(0,y)$ admet un zéro simple en l'origine. Alors si $k>0$ et $a>0$, on a $\psi_{t^{a}} (\mathcal{E}^{f(t,y)/t^{k}}\otimes \mathcal{R}) \simeq 0$ au voisinage de $0$.
\end{lemme}
\begin{lemme} \label{annulation 2}
On suppose que $f(0,0)\neq 0$ et soit $g=t^{a}y^{b}$ avec $a\neq 0$ et $b\neq 0$. Alors si $k,k'>0$, on a $\psi_{g} (\mathcal{E}^{f(t,y)/t^{k}y^{k'}}\otimes \mathcal{R}) \simeq 0$ au voisinage de $0$.
\end{lemme}

\begin{lemme} \label{annulation 3}
On fait l'hypothèse que $f=t^{l}g(t,y)+y^{m}h(t,y)$ avec $g(0,0)\neq 0$, $h(0,0)\neq 0$, $(l,m) \neq (0,0)$. Soit $g=t^{a}y^{b}$ avec $(a,b)\neq 0$. Alors si $k > l$ et $k'>0$, on a $\psi_{g}(\mathcal{E}^{f(t,y)/t^{k}y^{k'}}\otimes \mathcal{R})  \simeq 0$ au voisinage de $0$.
\end{lemme}

\section{Preuve du théorème} \label{preuve}
\subsection{Réduction au cas où $\mathds{K}=\mathds{C}$}\label{Réduction}
Il va s'agir d'une application du principe de Lefschetz. Soit $\mathds{L}$ une extension de $\mathds{K}$, $\overline{\mathds{L}}$ une clôture algébrique de $\mathds{L}$ et $\overline{\mathds{K}}$ la clôture algébrique de $\mathds{K}$ dans $\overline{\mathds{L}}$. Soit $\mathcal{M}$ un $\mathds{K}((x))$-module différentiel.
\begin{lemme} \label{extension des scalaires}
La formule \eqref{formule} est vraie pour $\mathcal{M}_{\mathds{L}}$ si et seulement si elle est vraie pour $\mathcal{M}$.
\end{lemme}
\begin{proof}
On commence par observer que les manipulations géométriques intervenant dans la construction d'Abbes et Saito \eqref{construction AS} commutent à l'extension des scalaires, de même que les foncteurs $\psi_{\pi}$ et $\mathfrak{F}$.  \\ \indent
Supposons que \eqref{formule} soit vraie pour $\mathcal{M}$ et considérons le diagramme cartésien
\[
\xymatrix{ 
\mathds{A}^{\mathds{Q}_{>0}}_{\mathds{L}} \ar[r] \ar[d]_-{[\quad]_{\mathds{L}}} &  \mathds{A}^{\mathds{Q}_{>0}}_{\mathds{K}}  \ar[d]^-{[\quad]} \\
T_{r,\mathds{L}} ^{\vee}  \ar[r]      &   T_{r} ^{\vee } 
}
\]
On a 
\begin{equation}\label{sur L1}
\mathfrak{F}\psi_{\pi} H_{n,k}(\mathcal{M_{\mathds{L}}})=\delta_{0}^{n_{<r-1}^{2}(\mathcal{M})}\oplus\bigoplus_{\omega \in \Omega_{r-1}(\mathcal{M})}\bigoplus_{P_{\omega}\in [\omega]\times_{\mathds{K}}\mathds{L} }\delta_{P_{\omega}}^{[\mathds{K}(\omega):\mathds{K}([\omega])]n_{\omega}^{2}}. \\
\end{equation}
Soient $\omega \in \Omega_{r-1}(\mathcal{M})$ et $P_{\omega}\in [\omega]\times_{\mathds{K}}\mathds{L}$. En développant les termes de \eqref{sur L1} grâce aux formules
\begin{equation*}\label{sur L2}
\begin{split}
[\mathds{K}(\omega):\mathds{K}([\omega])]&=\displaystyle{\sum_{\omega^{\prime}\in  P_{\omega}\times_{[\omega]}\omega}} [\mathds{L}(\omega^{\prime}):\mathds{L}([\omega^{\prime}]_{\mathds{L}})], \\
[\mathds{K}(\omega):\mathds{K}]&=\displaystyle{\sum_{\omega^{\prime}\in  \omega\times_{\mathds{K}}\mathds{L}}} [\mathds{L}(\omega^{\prime}):\mathds{L}],
\end{split}
\end{equation*}
on  observe que la somme triple que l'on obtient se fait sur $\Omega_{r-1}(\mathcal{M})_{\mathds{L}}=\Omega_{r-1}(\mathcal{M}_{\mathds{L}})$. Vue de cette façon, elle s'explicite en 
$$
\mathfrak{F}\psi_{\pi} H_{n,k}(\mathcal{M}_{\mathds{L}})= 
\delta_{0}^{n_{<r-1}^{2}(\mathcal{M}_{\mathds{L}})}\oplus\bigoplus_{\omega \in \Omega_{r-1}(\mathcal{M}_{\mathds{L}})}\delta_{[\omega]_{\mathds{L}}}^{[\mathds{L}(\omega):\mathds{L}([\omega]_{\mathds{L}})]n_{\omega}^{2}},
$$
qui est exactement la formule \eqref{formule}  pour $\mathcal{M}_{\mathds{L}}$. \\ \indent
Supposons réciproquement que \eqref{formule} est vraie pour $\mathcal{M}_{\mathds{L}}$, à savoir
\begin{equation}\label{un dernier effort}
\mathfrak{F}\psi_{\pi} H_{n,k}(\mathcal{M})_{\mathds{L}} \simeq  
\delta_{0}^{n_{<r-1}^{2}(\mathcal{M}_{\mathds{L}})}\oplus\bigoplus_{\omega \in \Omega_{r-1}(\mathcal{M})_{\mathds{L}}}\delta_{[\omega]_{\mathds{L}}}^{[\mathds{L}(\omega):\mathds{L}([\omega]_{\mathds{L}})]n_{\omega}^{2}}.
\end{equation}
Alors, le support de $\mathfrak{F}\psi_{\pi} H_{n,k}(\mathcal{M})$ est réduit à un nombre fini de points fermés et \ref{extension des scalaires} provient de ce que les multiplicités de ces points sont inchangées par extension des scalaires.
\end{proof}
Soit $\mathcal{N}$  un modèle algébrique de $\mathcal{M}$, c'est-à-dire un $\mathds{K}[x,x^{-1}]$-module différentiel tel que $\mathcal{M}\simeq  \mathds{K}((x))\otimes_{K[x,x^{-1}]}\mathcal{N}$. Un tel modèle existe d'après \cite[2.4.10]{Katz}. \\ \indent
Soit $\mathds{K}^{\prime}/\mathds{Q}$ l'extension de $\mathds{Q}$ engendrée par les coefficients des polynômes de Laurent intervenant dans la matrice de $\partial_{x}$ dans une base choisie de $\mathcal{N}$. Si on note $\mathcal{N}_{\mathds{K}^{\prime}}$ le $\mathds{K}^{\prime}((x))$-module différentiel que ce choix de base définit, le lemme \ref{extension des scalaires} assure qu'il suffit de prouver \ref{théorème} pour 
$\mathcal{N}_{\mathds{K}^{\prime}}$. Puisque le degré de transcendance de $\mathds{K}^{\prime}/\mathds{Q}$ est fini, on peut se donner un plongement  de $\mathds{K}^{\prime}$ dans $\mathds{C}$. Via ce choix de plongement, on se ramène toujours par \ref{extension des scalaires} à démontrer \ref{théorème} pour le module différentiel complexe qui se déduit de $\mathcal{N}_{\mathds{K}^{\prime}}$ par extension des scalaires. 
\begin{center}
\textbf{Dans toute la suite, on supposera que $\mathds{K}=\mathds{C}$.} \\ \indent
\end{center}

Dans la coordonnée $y=\frac{dx}{x^{r}}$ de $T_{r}$, il s'agit donc de démontrer
\begin{equation}\label{ce qu'il faut prouver}
\psi_{\pi} H_{n,k}(\mathcal{M})= 
\mathcal{O}_{T_{r}}^{n_{<r-1}^{2}(\mathcal{M})}\oplus\bigoplus_{\omega \in \Omega_{r-1}(\mathcal{M})}(\mathcal{E}^{(1-r)c_{r-1}(\omega)y})^{n_{\omega}^{2}}.
\end{equation}

\subsection{Réduction au cas où $n$ est un multiple de $m_{\mathcal{M}}$}\label{Réduction au cas où $m$ divise $n$}

Soit $m$ un entier naturel. Le morphisme $\nu_{m}: S_{mn} \rightarrow S_{n}$ d'élévation à la puissance $m$ donne un cube
\[
\xymatrix@-1pc{
S \times \eta_{mn} \ar[rr]^-{\id \times \nu_{m}} \ar[dd] \ar[dr]_-{j_{mn,mk}} && S \times \eta_{n} \ar[dr]^-{j_{n,k}} \ar'[d][dd]  \\
& S_{1,mn}(D_{mk}) \ar[rr]_(0.4){p} \ar[dd]^(0.4){\pi^{\prime}} && S_{1,n}(D_{k}) \ar[dd]^(0.4){\pi} \\
\eta_{mn}\ar'[r][rr]  \ar[dr] && \eta_{n} \ar[rd] \\
& S_{mn} \ar[rr] && S_{n} \\
}
\]
à faces commutatives et cartésiennes, et on a 
$$
\begin{array}{ll}
p^{+}H_{n,k}(\mathcal{M})&\simeq j_{mn,mk+}(\id \times      \nu_{m})^{+}\mathcal{H}om(p^{+}_{2}\gamma^{+}_{n}\mathcal{M}, p^{+}_{1}\mathcal{M})\\ &\simeq  j_{mn,mk+}\mathcal{H}om(p^{+}_{2}\gamma^{+}_{mn}\mathcal{M}, p^{+}_{1}\mathcal{M}) \\   &= H_{mn,mk}(\mathcal{M}).
\end{array}
$$
Donc par \ref{t^{r}}, il vient 
$$
\psi_{\pi}H_{n,k}(\mathcal{M}) \simeq\psi_{\pi^{\prime}}H_{mn,mk}(\mathcal{M})
$$
de sorte qu'il suffit de démontrer \eqref{ce qu'il faut prouver} pour le couple $(mn,mk)$. On peut donc supposer que $n$ est un multiple de $m_{\mathcal{M}}$.
\subsection{Le cas où $n$ est un multiple de $m_{\mathcal{M}}$}\label{Le cas où $m$ divise $n$}
En complétant $S_{1,n}(D_{k})$ le long de la fibre spéciale de $\pi$, définie par l'idéal $(t)$, on obtient d'après \cite[8.12]{Mat} le schéma $\hat{S_{1,n}(D_{k})}$ d'anneau de fonctions
\[
\xymatrix{
\mathds{C}[ y]\llbracket x,t\rrbracket /(x-t^{n}p)    \ar[r]^-{\sim}
& \mathds{C}[ y]\llbracket t\rrbracket 
},
\]
où on a posé $p=1+yt^{k-n}$. D'après \ref{formalisation}, on ne change pas les cycles proches de $H_{n,k}(\mathcal{M})$ en restreignant la situation à $\hat{S_{1,n}(D_{k})}$, ce que l'on fera dans la suite. Or $p$ admet des racines $n$-ième dans l'anneau de fonctions de $\hat{S_{1,n}(D_{k})}$. Notons $z$ la racine de $p$ satisfaisant à $z \equiv 1+yt^{k-n}/n \quad \left(t^{k-n+1}\right)$, et posons $\tau=zt$. \\ \indent
Puisque par \ref{Réduction au cas où $m$ divise $n$} on peut supposer que l'indice de ramification de $\mathcal{M}$ divise $n$, on a 
$$
\gamma^{+}_{n}\mathcal{M} \simeq \displaystyle{\bigoplus_{\Omega(\mathcal{M})}} \mathcal{E}^{\omega(t)} \otimes \mathcal{R}_{\omega(t)}
$$
Par définition de l'anneau de fonctions de $\hat{S_{1,n}(D_{k})}$, la variable $\tau$ est une racine $n$-ième de $x$, de sorte qu'on a aussi
$$
p^{+}_{1}\mathcal{M} \simeq \displaystyle{\bigoplus_{\Omega(\mathcal{M})}} \mathcal{E}^{\omega(\tau)} \otimes \mathcal{R}_{\omega(\tau)}
$$
En remplaçant $\tau$ par $zt$, il vient
$$
\psi_{ \pi } H_{n,k}(\mathcal{M}) \simeq \displaystyle{\bigoplus_{\omega_{1},\omega_{2}\in \Omega(\mathcal{M})}} \psi_{t}(\mathcal{E}^{\omega_{1}(zt)-\omega_{2}(t)}\otimes \mathcal{R}_{\omega_{1}, \omega_{2}}) ,
$$
avec $\mathcal{R}_{\omega_{1}, \omega_{2}}$ régulier le long de la fibre spéciale. Désignons par $H_{n,k}(\omega_{1}, \omega_{2})(\mathcal{R}_{\omega_{1}, \omega_{2}})$ ou même $H_{n,k}(\omega_{1}, \omega_{2})$ quand aucune confusion n'est possible, le terme de cette somme correspondant aux formes $\omega_{1}$ et $\omega_{2}$, et écrivons $\omega_{i}=P_{i}(t)/t^{q_{i}}$ avec $\deg P_{i}<q_{i}$. $P_{i}(0)$ est le coefficient dominant de $\omega_{i}$ pour la variable $1/t$. Soit $n_{\omega}$ le rang de $\mathcal{R}_{\omega}$. La formule \eqref{ce qu'il faut prouver} (et par suite le théorème \ref{théorème}) se déduit des calculs suivants:
\begin{lemme} \label{regulier pas regulier}
Si $\omega\neq 0$, $\psi_{t} H_{n,k}(\omega,0)\simeq \psi_{t} H_{n,k}(0,\omega) \simeq  0$. 
\end{lemme}
\begin{proof} 
Par définition,  $H_{n,k}(\omega,0)\simeq \mathcal{E}^{\omega(zt)}\otimes \mathcal{R}$ et $H_{n,k}(0, \omega)\simeq \mathcal{E}^{\omega(t)}\otimes \mathcal{R}^{\prime}$, avec $\mathcal{R}$ et $\mathcal{R}^{\prime}$ réguliers, donc \ref{regulier pas regulier} est une application immédiate de \ref{annulation 1}.
\end{proof} 
\begin{lemme}
Si $\omega_{1} \neq \omega_{2}$ sont non nulles, alors $\psi_{t} H_{n,k}(\omega_{1}, \omega_{2})\simeq 0. $ 
\end{lemme}
\begin{proof} Si $q_{1} < q_{2}$, 
\begin{equation*}
\begin{split}
t^{q_{2}}(\omega_{1}(zt)-\omega_{2}(t)) &= t^{q_{2}-q_{1}}z^{-q_{1}}P_{1}(zt)-P_{2}(t) \\
                                     &\equiv  -P_{2}(0) \quad (t),
\end{split}
\end{equation*}
de sorte que le lemme \ref{annulation 1} s'applique, et de même si $q_{1} > q_{2}$. \\
Si $q_{1}=q_{2}=q$,
\begin{equation*}
\begin{split}
t^{q}(\omega_{1}(zt)-\omega_{2}(t)) &= z^{-q}P_{1}(zt)-P_{2}(t) \\
                                    &\equiv  P_{1}(t)-P_{2}(t)-qP_{1}(0)yt^{k-n}/n \quad (t^{k-n+1}).
\end{split}
\end{equation*}
La valuation $t$-adique de $P_{1}-P_{2}$ est finie et plus petite que $q-1$, donc quelle que soit la façon dont elle se compare à $k-n$, le lemme \ref{annulation 1} s'applique. 
\end{proof}
Dans les deux lemmes qui suivent, on utilisera l'observation que $z-1=yt^{k-n}/r_{n}$ avec $r_{n}=\frac{z^n-1}{z-1}=\prod_{\zeta \in U_{n}\setminus{\{1\}}}(z-\zeta)$ unité de $\mathds{C}[ y]\llbracket t\rrbracket$.

\begin{lemme} \label{regulier}
$\psi_{t} H_{n,k}(0,0)\simeq \mathcal{O}_{T_{r}}^{n_{0}^{2}}$. 
\end{lemme}
\begin{proof}
On se donne une base de $\mathcal{R}_{0}$ dans laquelle la matrice de $\partial_{t}$ est de la forme $A/t$ avec $A\in GL_{n_{0}}(\mathds{C})$. Alors 
$$
H_{n,k}(0,0)\simeq (\mathcal{O}(\ast T_{r})^{n_{0}^{2}},d+(A \otimes 1-1 \otimes A)dt/t+A\otimes 1 dz/z),
$$
et il suffit donc de démontrer que si $B,C \in GL_{l}(\mathds{C})$ commutent, alors si on pose $$
\mathcal{H}_{B,C} :=(\mathcal{O}(\ast T_{r})^{l},d+Bdt/t+Cdz/z),
$$ 
on a
$$
\psi_{t}\mathcal{H}_{B,C}\simeq \mathcal{O}_{T_{r}}^{l}.
$$
On raisonne par récurrence sur le rang $l$, le cas où $l=1$ découlant à l'aide de \ref{MSaitopsi} d'un calcul immédiat. Supposons $l>1$. Puisque $B$ et $C$ commutent, le choix d'un vecteur propre commun permet de définir une suite exacte du type
\[
\xymatrix{
0 \ar[r] & \mathcal{H}_{\beta,\gamma} \ar[r] & \mathcal{H}_{B,C}  \ar[r] & \mathcal{H}_{B^{\prime},C^{\prime}}  \ar[r] & 0
},
\]
avec $\beta, \gamma \in \mathds{C}$ et $B^{\prime}$ et $C^{\prime}$ des matrices qui commutent. Donc par passage aux cycles proches, l'hypothèse de récurrence donne la suite exacte
$$
\xymatrix{
0 \ar[r] &  \mathcal{O}_{T_{r}} \ar[r] & \psi_{t}\mathcal{H}_{B,C}  \ar[r] &  \mathcal{O}_{T_{r}}^{l-1}  \ar[r] & 0
}.
$$
et \ref{regulier} découle alors de la nullité de $\Ext^{1}(\mathcal{O}_{T_{r}},\mathcal{O}_{T_{r}})$.
\end{proof}
\begin{lemme}
 On suppose $\omega_{1}=\omega_{2}=\omega$ de degré $q$. Alors, 
\begin{enumerate}
\item Si $k-n<q$, $\psi_{t} H_{n,k}(\omega,\omega)\simeq 0$.  \
\item Si $k-n>q$, $\psi_{t} H_{n,k}(\omega,\omega) \simeq \mathcal{O}_{T_{r}}^{n_{\omega}^{2}}$. \
\item Si $k-n=q$, $\psi_{t} H_{n,k}(\omega,\omega) \simeq (\mathcal{E}^{-(k/n-1)P(0)y})^{n_{\omega}^{2}}$.
\end{enumerate}
\end{lemme}   
\begin{proof}
On a 
\begin{equation}
\begin{split}
t^{q}(\omega(zt)-\omega(t)) &= z^{-q}P(zt)-P(t) \\
                            &\equiv (z^{-q}-1)P(t) \quad (t^{k-n+1}) \\
                            &\equiv -r_{q}yt^{k-n}P(t)/r_{n} \quad (t^{k-n+1})  \\
                            &\equiv -r_{q}yt^{k-n}P(0)/r_{n} \quad (t^{k-n+1}),
\end{split}
\end{equation}
de sorte que \ref{annulation 1} assure la nullité de $\psi_{t} H_{n,k}(\omega,\omega)$ dans le cas $k-n<q$.
Si $k-n \geq q$, $\omega_{1}(zt)-\omega_{2}(t)$ est polynomiale,  donc $(2)$ et $(3)$ se déduisent de \ref{MSaitopsi}.
\end{proof}
Ceci achève la preuve de la formule \eqref{ce qu'il faut prouver}.

\appendix
\section{Preuve des lemmes \ref{annulation 2} et \ref{annulation 3}}
\subsection{Preuve de \ref{annulation 2}}
On se ramène à montrer la nullité de $\psi_{t^{a}y^{b}}( \mathcal{E}^{1/t^{k}y^{k'}}\otimes \hat{\mathcal{R}_{0}})$ avec $\hat{\mathcal{R}_{0}}$ de rang $1$ admettant un générateur $m$ qui vérifie $t\partial_{t}m= cm$, $c \in \mathds{C}$ et $\partial_{y}m=0$. Si on définit 
$$
\begin{array}{ccc}
h  :  \mathds{A}^{2}_{\mathds{C}} & \longrightarrow & \mathds{A}^{2}_{\mathds{C}} \\
                          (u,z)  & \mapsto & (u^{b},z^{a}), \\
\end{array}
$$
on sait d'après \ref{groupe fini} qu'il suffit de montrer la nullité du module 
$$\psi_{ (uz)^{ab}}(\mathcal{E}^{1/u^{bk}z^{ak'}}\otimes h^{+}\hat{\mathcal{R}_{0}}),$$ et on est ramené à étudier le cas où $a=b$. Par \ref{t^{r}2}, on peut supposer $a=b=1$. Par définition, $\psi_{ty}=\psi_{v}\circ \Gamma(g)_{+}$ où $\Gamma(g)$ désigne le graphe de $g$ et $v$ est donnée par
\[
\xymatrix{ 
 \mathds{A}^{2}_{\mathds{C}} \ar@{^{(}->}[r]^-{\Gamma(g)} \ar[rd]_-{g}& \mathds{A}^{2}_{\mathds{C}} \times \mathds{A}^{1}_{\mathds{C}}\ar[d]^-{v}   \\
                                                                        &       \mathds{A}^{1}_{\mathds{C}}
}
\] 
Or $s=\text{\og}  e^{1/t^{k}y^{k'}}\text{\fg}\otimes m$ engendre $\mathcal{E}^{1/t^{k}y^{k'}}\otimes \hat{\mathcal{R}_{0}}$ et vérifie $t^{k}y^{k'+1}\partial_{y}s=-k's$. Donc $s'=s \delta(v-ty)$ engendre $\Gamma(g)_{+}\mathcal{E}^{1/t^{k}y^{k'}}\otimes \hat{\mathcal{R}_{0}}$. Or
$$
v\partial_{y}\cdot s'=\partial_{y}v\cdot s'=\partial_{y}\cdot(tys\delta)=ts'+ty(\partial_{y}s)\delta-yt^{2}s\partial_{v}\delta \in V_{-1}s'.
$$
donne par multiplication par $y$
$$
yts'+ty^2(\partial_{y}s)\delta-y^2t^{2}s\partial_{v}\delta \in V_{-1}s'.
$$
Puisque $yts'=vs'\in V_{-1}s'$, on en déduit 
$$
ty^2(\partial_{y}s)\delta-y^2t^{2}s\partial_{v}\delta \in V_{-1}s'.
$$
D'autre part
$$
\partial_{v}v^{2}s'=2vs'+v^{2}\partial_{v}s'=t^{2}y^{2}s\partial_{v}\delta \in V_{-1}s'
$$ 
d'où finalement $ty^2(\partial_{y}s)\delta \in V_{-1}s'$. Puisque $k\geq 1$ et $k'\geq 1$, il fait sens de multiplier cette dernière relation par $t^{k-1}y^{k'-1}$. On obtient alors
$$
t^{k}y^{k'+1}(\partial_{y}s)\delta=-k's\delta=-k's' \in V_{-1}s'
$$
Ainsi, $s'\in V_{-1}s'$ est de polynôme de Bernstein constant, d'où \ref{annulation 2}.
\subsection{Preuve de \ref{annulation 3}}
Les cas $l=0$ ou $m=0$ étant traités par \ref{annulation 2}, on peut raisonner par récurrence sur $(l,m)$ et supposer $l\neq 0$ et $m\neq 0$. Soit $p:\tilde{U} \rightarrow U$ l'éclaté de $U$ en l'origine. Par compatibilité des cycles proches avec les modifications propres \cite[14.12]{Stokes}, on a
$$
\psi_{g}(\mathcal{E}^{f(t,y)/t^{k}y^{k'}}\otimes \mathcal{R})  \simeq 
p_{+}\psi_{g \circ p}(\mathcal{E}^{f(p_{t},p_{y})/p_{t}^{k}p_{y}^{k'}}\otimes p^{+}\mathcal{R}).
$$
Démontrons que $\psi_{g \circ p}(\mathcal{E}^{f(p_{t},p_{y})/p_{t}^{k}p_{y}^{k'}}\otimes p^{+}\mathcal{R})\simeq 0 $ sur le diviseur exceptionnel $E$. \\ \indent
Dans la carte $U_{0}$ de $\tilde{U}$ donnée par $t=u$ et $y=uv$, on a $p(u,v)=(u,uv)$, de sorte que la trace de E sur $U_{0}$ est donnée par $u=0$, et 
$$
f(p_{t},p_{y})/p_{t}^{k}p_{y}^{k'}=(u^{l}g(u,uv)+(uv)^{m}h(u,uv))/u^{k+k'}v^{k'}.
$$
Au voisinage d'un point $v_{0}\neq 0$ de $U_{0}$, \ref{annulation 1} s'applique immédiatement si $l \neq m$. Dans le cas $l=m$, on observe que $\frac{\partial f}{\partial y}(0,v_{0})=mv_{0}^{m-1}h(0,0)\neq 0$ de sorte que \ref{annulation 1} s'applique encore. Au voisinage de l'origine de $U_{0}$, \ref{annulation 2} s'applique si $l<m$, et sinon $m>0$ permet d'utiliser l'hypothèse de récurrence. \\ \indent
Dans la carte $U_{1}$ donnée par les coordonnées $t=u'v'$ et $y=v'$, $p(u',v')=(u'v',v')$, donc $E \cap U_{1}$ est défini par $u'v'=0$, et on a 
$$
f(p_{t},p_{y})/p_{t}^{k}p_{y}^{k'}=
((u'v')^{l}g(u'v',v')+v'^{m}h(u'v',v'))/u'^{k}v'^{k+k'}.
$$
On se place au voisinage de l'origine. Si $l \leq m$, la condition $l>0$ assure que l'hypothèse de récurrence s'applique, et sinon la situation est justiciable de \ref{annulation 2}.
\section{Nullité des cycles proches pour $r\leq 1$ et $\mathcal{M}$ décomposé}
\begin{proposition}\label{nullité}
On suppose $k\leq n$ et $\mathcal{M} \simeq \oplus \mathcal{E}^{\omega} \otimes  \mathcal{R}_{\omega}$ sans partie régulière. Alors $\psi_{\pi} H_{n,k}(\mathcal{M})\simeq 0$.
\end{proposition}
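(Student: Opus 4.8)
The plan is to follow the opening steps of the proof of \ref{théorème}, adapted to the regime $k \leq n$. First I would reduce to $\mathds{K} = \mathds{C}$: since the construction $H_{n,k}$, the functors $\psi_\pi$ and $\mathfrak{F}$, and the hypothesis of being decomposed all commute with extension of scalars, the argument of \ref{extension des scalaires} applies verbatim. Now $\mathcal{M} \simeq \bigoplus_\omega \mathcal{E}^\omega \otimes \mathcal{R}_\omega$ with every $\omega \neq 0$, so that $m_{\mathcal{M}} = 1$ and no reduction on $n$ is required. Writing $p_1^+ \mathcal{M}$ and $p_2^+ \gamma_n^+ \mathcal{M}$ in decomposed form and using $\mathcal{H}om(\bigoplus A_j, \bigoplus B_i) = \bigoplus \mathcal{H}om(A_j, B_i)$, the module $H_{n,k}(\mathcal{M})$ splits as $\bigoplus_{\omega_1, \omega_2} j_{n,k+}(\mathcal{E}^{g} \otimes \mathcal{R}_{\omega_1,\omega_2})$ with $g = \omega_1(x) - \omega_2(t^n)$, where $x = t^n + t^k y$ on $\{t \neq 0\}$ and $\mathcal{R}_{\omega_1,\omega_2} = \mathcal{R}_{\omega_2}(t^n)^\vee \otimes \mathcal{R}_{\omega_1}(x)$ is regular. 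By exactness of $\psi_\pi$ and its insensitivity to localization \cite[Prop. 4.4-3]{MM}, it then suffices to prove $\psi_t(\mathcal{E}^{g} \otimes \mathcal{R}_{\omega_1,\omega_2}) \simeq 0$ for each pair of nonzero forms, where $\psi_t$ is taken along the smooth divisor $T_r = \{t=0\}$ of the surface $S_{1,n}(D_k)$.

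The computation is local on $T_r$. The essential new feature compared with the case $r>1$ is that, since $k \leq n$, the locus $s_1 \times \eta_n$ along which $p_1^+ \mathcal{M}$ is singular is now $\{t^{n-k} + y = 0\}$ (resp. $\{1 + y = 0\}$ if $k = n$), and it meets $T_r$ at the single point $y = 0$ (resp. $y = -1$); one can no longer discard it by inverting an equation of $s_1 \times \eta_n$. Write $\deg \omega_i = d_i \geq 1$ with dominant coefficients $a_{d_1}, b_{d_2} \neq 0$. Away from the special point, $t^{n-k} + y$ is a unit, so from $x = t^k(t^{n-k}+y)$ one gets $g = F(t,y)/t^N$ with $N = \max(k d_1, n d_2) \geq 1$ and $F$ regular; moreover $F(0, y_0) \neq 0$ except at the finitely many $y_0$ where $k d_1 = n d_2$ and the dominant coefficients cancel, and there a direct expansion shows that $F(0, y)$ has a simple zero in $y$. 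In either case \ref{annulation 1} gives $\psi_t(\mathcal{E}^g \otimes \mathcal{R}_{\omega_1,\omega_2}) \simeq 0$ near $y_0$.

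It remains to treat the special point, which is where the work concentrates. Using the coordinate $w = t^{n-k} + y$ (resp. $w = 1 + y$), so that $x = t^k w$ (resp. $t^n w$), the exponent reads $g = \sum_j a_j/(t^{kj} w^j) - \sum_j b_j/t^{nj}$, with dominant pole $a_{d_1}/(t^{k d_1} w^{d_1})$. Putting $g = F/(t^K w^{d_1})$ with $K = \max(k d_1, n d_2)$, one finds $F(0,0) = a_{d_1} \neq 0$ when $k d_1 \geq n d_2$, so that \ref{annulation 2} applies, while when $k d_1 < n d_2$ one has $F = a_{d_1} t^{n d_2 - k d_1} - b_{d_2} w^{d_1} + \cdots$ of the type $t^l g_0 + w^m h_0$ with $g_0(0,0), h_0(0,0) \neq 0$ and $n d_2 > l = n d_2 - k d_1$, so that \ref{annulation 3} applies. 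Either way $\psi_t(\mathcal{E}^g \otimes \mathcal{R}_{\omega_1,\omega_2}) \simeq 0$ at the special point, which finishes the proof.

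The one point needing care — and the main obstacle — is that at the special point the regular factor $\mathcal{R}_{\omega_1,\omega_2}$ inherits from $\mathcal{R}_{\omega_1}(x)$ a regular singularity along the whole normal crossings divisor $\{t=0\} \cup \{w=0\}$, whereas \ref{annulation 2} and \ref{annulation 3} are stated for a factor meromorphic along $t = 0$ alone. I expect this to be harmless: reducing as in those lemmas to a rank one regular factor with $t\partial_t m = c m$ and $w \partial_w m = c' m$, the extra term produced by $w \partial_w$ carries a positive power of $t$ and hence lands in $V_{-1}$, so it does not affect the constancy of the Bernstein polynomial of the generating section that drives the vanishing. Thus the proofs of \ref{annulation 2} and \ref{annulation 3} extend with no real change to a factor regular along $\{t w = 0\}$, and the argument goes through.
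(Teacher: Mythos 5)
Your argument is essentially the paper's own proof: the same decomposition of $H_{n,k}(\mathcal{M})$ into summands $\mathcal{E}^{\omega_1(x)-\omega_2(t^n)}\otimes\mathcal{R}_{\omega_1,\omega_2}$, the same change of variables $(u,v)=(t,\,t^{n-k}+y)$ turning $x$ into $u^kv$, and the same case analysis on $\max(kq_1,nq_2)$ applying \ref{annulation 1} away from $v=0$ and \ref{annulation 2} or \ref{annulation 3} at the origin according to the sign of $nq_2-kq_1$. Your closing remark that $\mathcal{R}_{\omega_1,\omega_2}$ is in fact regular along the normal crossings divisor $uv=0$ rather than along $t=0$ alone, and that the proofs of \ref{annulation 2} and \ref{annulation 3} extend because the extra term coming from $v\partial_v$ lands in $V_{-1}$, supplies a verification that the paper leaves implicit.
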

\begin{proof}
On a 
$$
\psi_{ \pi } H_{n,k}(\mathcal{M}) \simeq \displaystyle{\bigoplus_{\omega_{1},\omega_{2}}} \psi_{t}(\mathcal{E}^{\omega_{1}(x)-\omega_{2}(t^{n})}\otimes \mathcal{R}_{\omega_{1}, \omega_{2}}), 
$$
avec $\mathcal{R}_{\omega_{1}, \omega_{2}}$ régulier le long de la fibre spéciale. Désignons par $H_{n,k}(\omega_{1}, \omega_{2})$ le terme de cette somme correspondant aux formes $\omega_{1}$ et $\omega_{2}$, et écrivons $\omega_{i}=P_{i}(x)/x^{q_{i}}$ avec $\deg P_{i}<q_{i}$. $P_{i}(0)$ est le coefficient dominant de $\omega_{i}$ pour la variable $1/x$. $\mathcal{M}$ étant supposé sans partie régulière, $P_{i}(0)\neq 0$. Alors en effectuant le changement de variable $u=t$ et $v=t^{n-k}+y$, on a 
\begin{equation}\label{calcul}
\begin{split}
\omega_{1}(x)-\omega_{2}(t^{n}) &= \frac{P(t^{n}+yt^{k})}{t^{kq_{1}}(t^{n-k}+y)^{q_{1}}}-\frac{P_{2}(t^{n})} {t^{nq_{2}}}                     =\frac{P_{1}(u^{k}v)}{u^{kq_{1}}v^{q_{1}}}-\frac{P_{2}(u^{n})}{u^{nq_{2}}} \\
                                &=\frac{u^{nq_{2}}P_{1}(u^{k}v)-u^{kq_{1}}v^{q_{1}}P_{2}(u^{n})}{u^{kq_{1}+nq_{2}}v^{q_{1}}}                                                                                                                  .
\end{split}
\end{equation}

On se place au voisinage de $v_{0}\neq 0$. Si on suppose $nq_{2}>kq_{1}$, \eqref{calcul} s'écrit $(u^{nq_{2}-kq_{1}}P_{1}(u^{k}v)-v^{q_{1}}P_{2}(u^{n}))/u^{nq_{2}}v^{q_{1}}$. Si $f(u,v)$ désigne la partie non polaire de cette expression, $f(0,v_{0})=-P_{2}(0)/ v_{0}^{q_{1}}$ est non nul, donc \ref{annulation 1} s'applique. On raisonne de même avec le cas $nq_{2}<kq_{1}$ en utilisant $P_{1}(0)\neq 0$. Si $nq_{2}=kq_{1}$, la partie non polaire de \eqref{calcul} est $f(u,v)=(P_{1}(u^{k}v)-v^{q_{1}}P_{2}(u^{n}))/v^{q_{1}}$ et en évaluant en $(0,v)$ on constate que la situation satisfait les hypothèses de \ref{annulation 1}. \\ \indent
On se place au voisinage de $v_{0}=0$. Si on suppose $nq_{2}\leq kq_{1}$, \eqref{calcul} devient $(P_{1}(u^{k}v)-u^{kq_{1}-nq_{2}}v^{q_{1}}P_{2}(u^{n}))/u^{kq_{1}}v^{q_{1}}$. Le numérateur  vaut $P_{1}(0)\neq 0$, donc \ref{annulation 2} s'applique. Sinon,  \eqref{calcul}  est $(u^{nq_{2}-kq_{1}}P_{1}(u^{k}v)-v^{q_{1}}P_{2}(u^{n}))/u^{nq_{2}}v^{q_{1}}$ et on est dans le cadre de \ref{annulation 3}.                                                                                                           
\end{proof}

\bibliographystyle{amsalpha}
\bibliography{ASv3}

\end{document}